\documentclass[10pt,reqno]{article}
\usepackage{fullpage} 
\usepackage{amssymb} 
\usepackage{graphicx} 
\usepackage{amsmath} 
\usepackage{amsthm,amssymb,latexsym} 
\usepackage{amstext, amsfonts,amsbsy} 
\usepackage{enumerate} 
\usepackage{upgreek} 
\usepackage{color} 
\usepackage{accents} 
\usepackage{mathtools} 
\usepackage{float} 
\usepackage{tikz}
\usetikzlibrary{matrix,graphs,arrows,positioning,calc,decorations.markings,shapes.symbols}
\usepackage[pdftex,bookmarks,colorlinks,breaklinks]{hyperref}  
\definecolor{dullmagenta}{rgb}{0.4,0,0.4}   
\definecolor{darkblue}{rgb}{0,0,0.4}
\hypersetup{linkcolor=red,citecolor=blue,filecolor=dullmagenta,urlcolor=darkblue} 
\usepackage{eucal}

\newtheorem{theorem}{Theorem}

\newtheorem{lemma}[theorem]{Lemma}

\newcommand{\dPain}[1]{\text{P}\left(\mathrm{#1}\right)}

\theoremstyle{definition}

\theoremstyle{remark}
\newtheorem{remark}[theorem]{Remark}

\numberwithin{equation}{section}

\begin{document}
{\noindent\Large\bf Recurrence coefficients for the semiclassical Laguerrre weight and d-$\dPain{A_{2}^{(1)}/E_{6}^{(1)}}$ equations}
\medskip
\begin{flushleft}
\textbf{Siqi Chen}\\
School of Mathematics and Statistics, Qilu University of Technology (Shandong Academy of Sciences)
Jinan 250353, China\\
E-mail: \href{mailto:chen_siqi0301@163.com}{\texttt{chen\_siqi0301@163.com}}\\[5pt]

\textbf{Mengkun Zhu}\\
School of Mathematics and Statistics, Qilu University of Technology (Shandong Academy of Sciences)
Jinan 250353, China\\
E-mail: \href{mailto:zmk@qlu.edu.cn}{\texttt{zmk@qlu.edu.cn}}\\[5pt]
\emph{Keywords}: Orthogonal polynomials, Painlev\'e equations,
Birational transformations\\[3pt]
\emph{MSC2020}: 33C47, 34M55, 14E07
\end{flushleft}
\begin{abstract}
In this paper, we use Sakai's geometric framework to explore the profound interconnection between recurrence coefficients of the semiclassical Laguerre weight
$w(x)=x^{\lambda}\mathrm{e}^{-x^2+sx}$, $x\in\mathbb{R}^+$, $\lambda>-1$, $s\in\mathbb{R}$,
and Painlev\'e equations.
Specifically, we introduce a new transformation for the expressions obtained by Filipuk et al. in their analysis of ladder operators for semiclassical Laguerre polynomials, thereby deriving a recurrence relation. Subsequently, we establish a correspondence between this recurrence relation and a class of  d-$\dPain{A_{2}^{(1)}/E_{6}^{(1)}}$ equations.
\end{abstract}
\section{Introduction} 
\label{sec:introduction}
Orthogonal polynomials play a fundamental role in various branches of mathematics and mathematical physics, including random matrix theory, approximation theory, numerical analysis, and so on. In recent years, the relationship between recurrence coefficients of semiclassical orthogonal polynomials and \mbox{solutions} to discrete or differential Painlev\'{e} equations has attracted significant attention. For example, Magnus \cite{Mag:1995:PTDEFTRCOSOP} studied the weight function
\begin{equation*}
w(x)=\mathrm{e}^{x^3/3+sx},\quad x^3<0, \quad s\in\mathbb{R},
\end{equation*}
and established a connection between the recurrence coefficients and the $\mathrm{P_{II}}$ equation. In \cite{CI:2010:PAASLSIHRMEI},
Chen and Its investigated the singularly perturbed Laguerre weight
\begin{equation*}
w(x)=x^{\lambda}\mathrm{e}^{-x-s/x}, \quad x\geq0, \quad \lambda\in\mathbb{R}^+, \quad s\in\mathbb{R}^+.
\end{equation*}
Their findings revealed that the diagonal recurrence coefficient of monic orthogonal polynomials satisfies a particular $\mathrm{P_{III'}}$ equation. Clarkson and Jordaan \cite{CJ:2014:TRBSLPATFPE} showed that the recurrence coefficients of the semiclassical Laguerre weight
\begin{equation}\label{eq-wf}
w(x):=w(x;s)=x^{\lambda}\mathrm{e}^{-x^2+sx}, \quad x\in\mathbb{R}^{+}, \quad \lambda > -1, \quad s\in\mathbb{R},
\end{equation}
satisfy differential equations that are related to $\mathrm{P_{IV}}$ equation. See also \cite{BV:2010:DPEFRCOSLP,CJK:2016:AGFW,FVZ:2012:TRCOSLPATFPE,VA:2018:OPAPE}.

In \cite{Sak:2001:RSAWARSGPE}, Sakai proposed a classification scheme for both continuous and discrete Painlev\'e equations, based on the geometric theory of Painlev\'e equations. He gave a complete classification of all possible configuration spaces for discrete Painlev\'e dynamics, demonstrating that each such configuration space is a family of specific rational algebraic surfaces, known as generalized Halphen surfaces.

In this paper, based on Sakai's work, we also focus on the semiclassical Laguerre weight \eqref{eq-wf}, while another semiclassical Laguerre weight $w(x)=x^{\lambda}\mathrm{e}^{-N(x+c(x^2-x))}$ is discussed in \cite{DzhFilSto:2022:DEFTRCOSOPATRTTPEVGA}. Consider a sequence of monic polynomials $P_{n}(x;s)=x^{n}+\mathbf{p}(n,s)x^{n-1}+\cdots$ orthogonal with respect to the weight \eqref{eq-wf}, i.e.,
\begin{equation*}
\int_{0}^{\infty}P_{j}(x;s)P_{k}(x;s)x^{\lambda}\mathrm{e}^{-x^2+sx}\mathrm{d}x=\delta_{j,k}h_{j}(s),
\end{equation*}
where $h_{j}(s)$ denotes the $L^2$-norm of squared of $P_{j}(x;s)$. These monic orthogonal polynomials satisfy the three-term recurrence relation
\begin{equation*}
xP_{n}(x;s)=P_{n+1}(x;s)+\alpha_{n}(s)P_{n}(x;s)+\beta_{n}(s)P_{n-1}(x;s),
\end{equation*}
with the initial condition $P_{0}(x;s)=1$, $P_{-1}(x;s)=0$.

In \cite{BV:2010:DPEFRCOSLP}, it was shown that the recurrence coefficients satisfy the discrete system
\begin{equation}\label{eq:bv-evol}
	\left\{ \begin{aligned}
		\widetilde{x}_{n-1} \widetilde{x}_{n} &= \frac{\widetilde{y}_n+n+\frac{1}{2}\lambda}{\widetilde{y}_{n}^2-\frac{1}{4}\lambda^2},\\
		\widetilde{y}_{n} + \widetilde{y}_{n+1} &=\frac{1}{\widetilde{x}_n}\left(\frac{s}{\sqrt{2}}-\frac{1}{\widetilde{x}_{n}}\right),
	\end{aligned}\right.
\end{equation}
where
\begin{equation*}
\widetilde{x}_{n}(s)=\frac{\sqrt{2}}{s-2\alpha_{n}(s)}, \qquad \widetilde{y}_{n}(s)=2\beta_{n}(s)-n-\frac{1}{2}\lambda,
\end{equation*}
and the system \eqref{eq:bv-evol} can be derived from an asymmetric d-$\mathrm{P_{IV}}$ equation via a limiting process.

Another way to studying the recurrence coefficients of orthogonal polynomials $P_n(x;s)$ is to use
\begin{alignat*}{2}
	\left(\frac{\textrm{d}}{\textrm{d}x}+B_n(x;s)\right)P_n(x;s)&= \beta_n(s) A_n(x;s) P_{n-1}(x;s),&\quad &\text{lowering operator}\\
	\left(\frac{\textrm{d}}{\textrm{d}x}-B_{n}(x;s)-v'(x)\right)P_{n-1}(x;s)&= -A_{n-1}(x;s)P_{n}(x;s),&\quad &\text{raising operator}
\end{alignat*}
where $A_n(x;s)$ and $B_n(x;s)$ are parameterized by the functions $R_{n}(s)$ and $r_{n}(s)$,
\begin{equation}\label{eq-AB}
	xA_n(x;s)=2x+R_n(s),\qquad xB_n(x;s)=r_n(s),
\end{equation}
here,
\begin{align*}
	R_n(s)&= \frac{\lambda}{h_n(s)}\int_{0}^{\infty}P_{n}^2(y;s)y^{\lambda-1}\mathrm{e}^{-y^2+sy}\mathrm{d}y,\\
    r_n(s)&=
\frac{\lambda}{h_{n-1}(s)}\int_{0}^{\infty}P_{n}(y;s)P_{n-1}(y;s)y^{\lambda-1}\mathrm{e}^{-y^2+sy}\mathrm{d}y .
\end{align*}

The functions $A_n(x)$ and $B_n(x)$ also satisfy the following modified compatibility conditions,
\begin{align}
B_{n+1}(x)+B_n(x)&=\left(x-\alpha_n(s)\right)A_n(x)-v'(x),\tag{$S_1$}\label{$S_1$}\\
1+\left(x-\alpha_n(s)\right)\left(B_{n+1}(x)-B_{n}(x)\right)&=\beta_{n+1}(s)A_{n+1}(x)-\beta_{n}(s)A_{n-1}(x),\tag{$S_2$}\\
B_n^2(x)+v'(x)B_n(x)+\sum_{j=0}^{n-1}A_{j}(x)&=\beta_n(s)A_n(x)A_{n-1}(x),\tag{$S_2'$}\label{$S_2'$}
\end{align}
where $v(x)=-\ln w(x)$. In \cite{FVZ:2012:TRCOSLPATFPE}, Filipuk et al. obtained the following by substituting equation \eqref{eq-AB} into \eqref{$S_1$} and \eqref{$S_2'$} and comparing the coefficients of the same powers of $x$,
\begin{align}
R_{n}(s)&=2\alpha_{n}(s)-s,\label{eq-1}\\
r_{n}(s)+r_{n-1}(s)&=\lambda-\alpha_{n}(s)R_{n}(s),\label{eq-2}\\
r_{n}(s)&=2\beta_{n}(s)-n,\label{eq-3}\\
r_{n}^2(s)-\lambda r_{n}(s)&=\beta_{n}(s)R_{n-1}(s)R_{n}(s),\label{eq-4}
\end{align}
where \eqref{eq-1}-\eqref{eq-4} correspond to (21), (22), (25) and (27) in \cite{FVZ:2012:TRCOSLPATFPE}.

According to \eqref{eq-1}-\eqref{eq-4}, we introduce a new different transformation $x_n(s)$, $y_n(s)$ via
\begin{equation*}
x_{n}(s):=\frac{1}{R_{n-1}(s)}, \qquad y_{n}(s):=-r_n(s),
\end{equation*}
which yields
\begin{equation}\label{eq:xyn-evol}
	\left\{ \begin{aligned}
		x_{n} x_{n+1} &= \frac{n-y_n}{2y_n^2+2\lambda y_n},\\
		y_{n} + y_{n-1} &= -\frac{2\lambda x_n^2-sx_n-1}{2x_n^2}.
	\end{aligned}\right.
\end{equation}
This recurrence relation mirrors the structure of (25) in \cite{Gra:2014:DPEAIP}, known to generate discrete Painlev\'{e} equations. By systematically applying the reduction methodology outlined in \cite{DzhFilSto:2020:RCDOPHWDPE}, we demonstrate that this recurrence relation \eqref{eq:xyn-evol} is a discrete Painlev\'{e} equation that is equivalent to the standard
example in the d-$\dPain{A_{2}^{(1)}/E_{6}^{(1)}}$ family. Our main result is as follows.
\begin{theorem}\label{thm:coordinate-change}
	The recurrence \eqref{eq:xyn-evol} is equivalent to the standard discrete Painlev\'e
	equation~\eqref{eq:dPE6-KNY} 
\begin{equation*}
	\overline{q} + q = p - t - \frac{a_2}{p},\qquad
	p + \underline{p} = q + t + \frac{a_1}{q},
\end{equation*}
    written in
	\cite{KajNouYam:2017:GAOPE}. This equivalence is achieved
	via the following change of variables:
	\begin{equation}\label{eq:xy2qp}
			x(q,p) = \frac{q}{\sqrt{2}(a_1-qp)},\quad
			y(q,p) = qp-a_{1},\quad
            s(t)=\sqrt{2}t.
	\end{equation}
	The inverse change of variables is given by	
	\begin{equation}\label{eq:qp2xy}
		q(x,y) = -\sqrt{2}xy,\quad
		p(x,y) = \frac{n-y}{\sqrt{2}xy},\quad
        t(s)=\frac{s}{\sqrt{2}}.
	\end{equation}
	The relationship between the semiclassical Laguerre weight recurrence parameters and the root variables of discrete Painlev\'e equations is given by
	\begin{equation}
		a_{0} =  1-\lambda, \quad a_{1} = -n,\quad
			a_{2} =  n + \lambda.
	\end{equation}
\end{theorem}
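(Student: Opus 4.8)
The plan is to prove the equivalence by direct verification that the birational change of variables \eqref{eq:xy2qp}--\eqref{eq:qp2xy} conjugates the two discrete dynamical systems, rather than by re-running the full surface-geometry classification. First I would confirm that \eqref{eq:xy2qp} and \eqref{eq:qp2xy} are genuinely mutually inverse. Substituting $q=-\sqrt2\,xy$ and $p=(n-y)/(\sqrt2\,xy)$ yields the two clean identities $qp=y-n$ and $a_1-qp=-y$ (using $a_1=-n$), from which $qp-a_1=y$ and $q/(\sqrt2(a_1-qp))=x$ follow at once, and the reverse composition is identical. At the same time I would record the consistency of the parameter identification $a_0+a_1+a_2=(1-\lambda)+(-n)+(n+\lambda)=1$, which is exactly the affine $A_2^{(1)}$ normalization required for \eqref{eq:dPE6-KNY}.

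Next I would fix the shift dictionary. Since $s$, and hence $t=s/\sqrt2$, is independent of $n$, the discrete evolution $n\mapsto n+1$ must act as the $E_6^{(1)}$ translation that fixes $t$ and $a_0=1-\lambda$ while sending $a_1\mapsto a_1-1$ and $a_2\mapsto a_2+1$, consistent with $a_1=-n$ and $a_2=n+\lambda$; accordingly I identify the bar and underline in \eqref{eq:dPE6-KNY} with the forward and backward shifts in $n$, assigning to each $a_1,a_2$ its value at the relevant level. Substituting \eqref{eq:qp2xy} into the two equations of \eqref{eq:dPE6-KNY} then proceeds in stages. The $\underline p$-equation is the easier one: after substitution the term $p_n$ cancels from both sides, leaving the auxiliary relation $p_{n-1}=-\sqrt2\,x_ny_n+s/\sqrt2+1/(\sqrt2\,x_n)$. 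Writing this relation one level up and feeding it into the $\overline q$-equation eliminates $y_{n+1}$ and collapses that equation, after clearing denominators, to $2x_nx_{n+1}y_n(y_n+\lambda)=n-y_n$, which is precisely the first equation of \eqref{eq:xyn-evol}. Finally, using this first equation in the form $p_{n-1}=\sqrt2\,x_n(y_{n-1}+\lambda)$ back in the auxiliary relation reduces it to $y_n+y_{n-1}=-\lambda+s/(2x_n)+1/(2x_n^2)$, the second equation of \eqref{eq:xyn-evol}.

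The main obstacle is not the algebra — each reduction is a routine manipulation of rational functions — but the bookkeeping of the shift structure. The correspondence is genuinely system-to-system rather than equation-by-equation: the transformed $\underline p$-equation is a relation coupling the levels $n-1$, $n$ and $n+1$, and only after substituting it into the transformed $\overline q$-equation (and, conversely, using the resulting first equation back in the auxiliary relation) does the clean split into the two equations of \eqref{eq:xyn-evol} emerge. Tracking which level ($n-1$, $n$, or $n+1$) each of $x,y,q,p$ and each parameter $a_1,a_2$ sits at, and checking that the translation direction is the forward shift $n\mapsto n+1$ rather than its inverse, is where care is required. Because \eqref{eq:xy2qp} and \eqref{eq:qp2xy} have already been shown to be mutually inverse, verifying the implication in one direction suffices: the conjugacy, and hence the stated equivalence, then holds in both directions.
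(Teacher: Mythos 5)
Your proposal is correct, but it proves the theorem by a genuinely different route than the paper. You take the change of variables \eqref{eq:xy2qp}--\eqref{eq:qp2xy} and the parameter dictionary $a_0=1-\lambda$, $a_1=-n$, $a_2=n+\lambda$ as given and verify the conjugacy by direct substitution; I checked your intermediate identities and they are all right: the second equation of \eqref{eq:dPE6-KNY} does reduce to $p_{n-1}=-\sqrt2\,x_ny_n+\tfrac{s}{\sqrt2}+\tfrac{1}{\sqrt2\,x_n}$ (strictly, it is the $n/(\sqrt2\,x_ny_n)$ pieces of $p_n$ and $a_1/q_n$ that cancel, not "$p_n$ from both sides"), feeding its upward shift into the $\overline q$-equation gives $2x_nx_{n+1}y_n(y_n+\lambda)=n-y_n$, and combining $p_{n-1}=\sqrt2\,x_n(y_{n-1}+\lambda)$ with the auxiliary relation gives the second equation of \eqref{eq:xyn-evol}. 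The paper instead \emph{derives} the transformation from Sakai's geometric machinery: it computes the base points and the induced action on $\operatorname{Pic}(\mathcal{X})$ (Lemmas~\ref{lem:base-pt-semi-laguerre}, \ref{lem:dyn}), identifies the surface and symmetry root bases, observes $\psi_{*}(\upalpha)=\upalpha+\langle 1,-1,0\rangle\delta$ versus $\varphi_{*}(\upalpha)=\upalpha+\langle 0,1,-1\rangle\delta$ so that $\psi=w_1\circ\varphi\circ w_1^{-1}$, adjusts the basis change by $w_1$ (Lemma~\ref{lem:change-basis-fin}), fixes the root variables by the period map, and only then pins down the M\"obius coefficients $A,\dots,N$ of the coordinate change from divisor correspondences such as $E_1-E_2=H_x-F_2-F_3$ and $E_7-E_8=F_7-F_8$. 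What the paper's approach buys is provenance and classification: it explains where \eqref{eq:xy2qp} comes from and places the recurrence in the d-$\dPain{A_{2}^{(1)}/E_{6}^{(1)}}$ family; what your approach buys is an elementary, self-contained and fully rigorous verification of the literal statement, at the cost of offering no way to have found the transformation in the first place. One presentational caution: your closing claim that "one direction suffices because the coordinate changes are mutually inverse" is not by itself a valid inference between systems of equations; what actually closes the loop is that every algebraic step you perform (solving the $\underline p$-equation for $p_{n-1}$, substituting, etc.) is reversible, equivalently that both systems define birational evolution maps $\Phi$, $\Psi$ and your one-way check establishes the identity $T^{-1}\circ\Phi\circ T=\Psi$ of rational maps on a Zariski-dense set. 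Since in your argument each manipulation is indeed invertible, this is a matter of stating the justification correctly rather than a gap.
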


\section{Discrete Painlev\'e Equations in the \lowercase{d}-$\dPain{A_{2}^{(1)}/E_{6}^{(1)}}$
family} 
\label{sec:discrete_painlev_e_equations_in_the_lowercase_d_dpain_a__2_1_e__6_1_family}
In this section, to ensure the self-contained nature of this paper, we have summarized the fundamental facts about the geometry of the $E_{6}^{(1)}$-family of Sakai surfaces and the standard discrete Painlev\'{e} equation associated with this surface family. For the standard example, we use $(q,p)$-coordinates and adopt the standard surface root basis illustrated in Figure~\ref{fig:d-roots-e61} and the standard symmetry root basis shown in Figure~\ref{fig:a-roots-a2}, as outlined in the reference \cite{KajNouYam:2017:GAOPE}.
\begin{figure}[ht]
\begin{equation}\label{eq:d-roots-e61}
	\raisebox{-32.1pt}{\begin{tikzpicture}[
			elt/.style={circle,draw=black!100,thick, inner sep=0pt,minimum size=2mm}]
		\path 	(0,2) 	node 	(d0) [elt, label={[xshift=-10pt, yshift = -10 pt] $\delta_{0}$} ] {}
		        (-2,0) node 	(d1) [elt, label={[xshift=0pt, yshift = -20 pt] $\delta_{1}$} ] {}
		        (-1,0) 	node  	(d2) [elt, label={[xshift=0pt, yshift = -20 pt] $\delta_{2}$} ] {}
		        ( 0,0) 	node  	(d3) [elt, label={[xshift=0pt, yshift = -20 pt] $\delta_{3}$} ] {}
		        ( 1,0) 	node  	(d4) [elt, label={[xshift=0pt, yshift = -20 pt] $\delta_{4}$} ] {}
		        ( 2,0) node 	(d5) [elt, label={[xshift=0pt, yshift = -20 pt] $\delta_{5}$} ] {}
                ( 0,1) node 	(d6) [elt, label={[xshift=-10pt, yshift = -10 pt] $\delta_{6}$}]
{};
		\draw [black,line width=1pt ] (d0) -- (d6) -- (d3) (d1) -- (d2) -- (d3)  (d3) -- (d4) -- (d5);
	\end{tikzpicture}} \qquad
\begin{aligned}
&\delta_0=\mathcal{E}_{7}-\mathcal{E}_{8},
\\
&\delta_1=\mathcal{E}_{1}-\mathcal{E}_{2},\quad
\\
&\delta_2=\mathcal{H}_{q}-\mathcal{E}_{1}-\mathcal{E}_{5},\quad
\end{aligned}
\begin{aligned}
&\delta_3=\mathcal{E}_{5}-\mathcal{E}_{6},
\\
&\delta_4=\mathcal{H}_{p}-\mathcal{E}_{3}-\mathcal{E}_{5},\quad
\\
&\delta_5=\mathcal{E}_{3}-\mathcal{E}_{4},
\\
&\delta_6=\mathcal{E}_{6}-\mathcal{E}_{7}.
\end{aligned}
\end{equation}
	\caption{The standard surface root basis for the $E_6^{(1)}$ surface sub-lattice.}
	\label{fig:d-roots-e61}
\end{figure}
\begin{figure}[ht]
\begin{equation}\label{eq:a-roots-a2}			
	\raisebox{-32.1pt}{\begin{tikzpicture}[
			elt/.style={circle,draw=black!100,thick, inner sep=0pt,minimum size=2mm}]
		\path 	(0,1) 	node 	(a0) [elt, label={[xshift=-10pt, yshift = -10 pt] $\alpha_{0}$} ] {}
		        (-1.15,-1) node 	(a1) [elt, label={[xshift=-10pt, yshift = -10 pt] $\alpha_{1}$} ] {}
		        (1.15,-1) node  	(a2) [elt, label={[xshift=10pt, yshift = -10 pt] $\alpha_{2}$} ] {};
		\draw [black,line width=1pt ] (a0) -- (a1) -- (a2)  -- (a0);
	\end{tikzpicture}} \qquad
			\begin{aligned}
&\alpha_0=\mathcal{H}_q+\mathcal{H}_p-\mathcal{E}_5-\mathcal{E}_6-\mathcal{E}_7-\mathcal{E}_8,
\\
&\alpha_1=\mathcal{H}_q-\mathcal{E}_3-\mathcal{E}_4,\quad
\\
&\alpha_2=\mathcal{H}_p-\mathcal{E}_1-\mathcal{E}_2,
\\
&\delta=\alpha_0+\alpha_1+\alpha_2.
\end{aligned}
\end{equation}
	\caption{The standard symmetry root basis for the $A_2^{(1)}$ symmetry sub-lattice.}
	\label{fig:a-roots-a2}	
\end{figure}
\subsection{The Point Configuration} 
\label{sub:the_point_configuration}
The Picard lattice of a rational algebraic surface $\mathcal{X}$, which is obtained by blowing up eight points on $\mathbb{P}^{1}\times\mathbb{P}^1$, is generated by the following classes,
\begin{equation*}
\operatorname{Pic}(\mathcal{X}) = \operatorname{Span}_{\mathbb{Z}}\{\mathcal{H}_{q},\mathcal{H}_{p},\mathcal{E}_{1},\ldots, \mathcal{E}_{8}\},
\end{equation*}
and this lattice is equipped with the symmetric bilinear product (the intersection form), which is defined on the generators by
$\mathcal{H}_{q}\bullet \mathcal{H}_{q} = \mathcal{H}_{p}\bullet \mathcal{H}_{p} = \mathcal{H}_{q}\bullet \mathcal{E}_{i} =
\mathcal{H}_{p}\bullet \mathcal{E}_{j} = 0,$ $\mathcal{H}_{q}\bullet \mathcal{H}_{p} = 1,$ and  $\mathcal{E}_{i}\bullet \mathcal{E}_{j} = - \delta_{ij}.$ Within this lattice, the anti-canonical divisor class is given by
$
- \mathcal{K}_{\mathcal{X}} = 2 \mathcal{H}_{q} + 2 \mathcal{H}_{p} - \mathcal{E}_{1}
	 - \mathcal{E}_{2} - \mathcal{E}_{3} - \mathcal{E}_{4} - \mathcal{E}_{5} - \mathcal{E}_{6} - \mathcal{E}_{7} - \mathcal{E}_{8}.
$
For the $E_{6}^{(1)}$ surface, this class should decompose into irreducible components each with self-intersection $-2$, specifically as
\begin{equation*}
- \mathcal{K}_{\mathcal{X}} = \delta = \delta_{0} + \delta_{1} + 2 \delta_{2} + 3 \delta_{3} + 2\delta_{4} + \delta_{5} + 2\delta_{6},
\end{equation*}
and this decomposition is given by the choice of the surface root basis shown on Figure~\ref{fig:d-roots-e61}.

We now proceed to describe the corresponding point configuration. Let $Q=1/q$ and $P=1/p$ denote the coordinates at infinity. By using the M\"{o}bius group action, we can arrange the points to be $p_{1}(\infty,0)$, $p_{3}(0,\infty)$, $p_{5}(\infty,\infty)$, and the
only remaining gauge action is the rescaling of the $q$-coordinate (to be utilized later for normalizing the root variables). We then get the point configuration shown on Figure~\ref{fig:surface-e6}. It is worth noting that one reason for adopting this point normalization is that they are located on the polar divisor of the standard symplectic form $\omega=\mathrm{d}q\wedge\mathrm{d}p$.
\begin{figure}[H]
	\begin{tikzpicture}[>=stealth,basept/.style={circle, draw=red!100, fill=red!100, thick, inner sep=0pt,minimum size=1.2mm}]
		\begin{scope}[xshift = 0cm]
			\draw [black, line width = 1pt]  	(4,0) 	-- (-0.5,0) 	node [left] {$H_{p}$} node[pos=0, right] {$p=0$};
			\draw [black, line width = 1pt] 	(4,2.5) -- (-0.5,2.5)	node [left] {$H_{p}$} node[pos=0, right] {$p=\infty$};
			\draw [black, line width = 1pt] 	(0,3) -- (0,-0.5)		node [below] {$H_{q}$} node[pos=0, above, xshift=-7pt] {$q=0$};
			\draw [black, line width = 1pt] 	(3,3) -- (3,-0.5)		node [below] {$H_{q}$} node[pos=0, above, xshift=7pt] {$q=\infty$};
			
			\node (p1) at (3,0) 	[basept,label={[xshift=10pt, yshift = -15 pt] $p_{1}$}] {};
			\node (p2) at (2.5,0.5) 	[basept,label={[xshift=-10pt, yshift = -5 pt] $p_{2}$}] {};
			\node (p3) at (0,2.5) [basept,label={[xshift=-10pt, yshift =0 pt] $p_{3}$}] {};
			\node (p4) at (0.5,2.0) [basept,label={[xshift=10pt, yshift =-15 pt] $p_{4}$}] {};
			\node (p5) at (3,2.5) 	[basept,label={[xshift=-10pt, yshift =0 pt] $p_{5}$}] {};
			\node (p6) at (3.5,2) 	[basept,label={[xshift=0pt, yshift =-18 pt] $p_{6}$}] {};
			\node (p7) at (4.1,2)		[basept,label={[xshift=0pt, yshift =-18 pt] $p_{7}$}] {};
			\node (p8) at (4.7,2) [basept,label={[xshift=0pt, yshift =-18 pt] $p_{8}$}] {};
			\draw [line width = 0.8pt, ->] (p8) edge (p7) (p7) edge (p6) (p6) edge (p5) (p2) edge (p1) (p4) edge (p3);
		\end{scope}

		\draw [->] (7,1.5)--(5,1.5) node[pos=0.5, below] {$\operatorname{Bl}_{p_{1}\cdots p_{8}}$};

		\begin{scope}[xshift = 9cm]
			\draw [blue, line width = 1pt] 	(2.5,2.5) -- (0.5,2.5)	node [left] {$H_{p} - E_{3} - E_{5}$};
			\draw [red, line width = 1pt] 	(0,2) -- (0,-0.5)		node [below] {$H_{q}-E_{3}$};
			\draw [red, line width = 1pt ] (2.5,0) -- (-0.5,0)        node [left] {$H_{p} - E_{1}$};
			\draw [blue, line width = 1pt] 	(3,2)--(3,0.5)          node [right] {$H_{q} - E_{1}-E_{5}$};

			\draw [blue, line width = 1 pt] (1.1,2.6) -- (-0.1,1.4) node [left] {$E_{3}-E_{4}$};
            \draw [blue, line width = 1 pt] (3.1,1.1) -- (1.9,-0.1) node [below] {$E_{1}-E_{2}$};
			\draw [red, line width = 1 pt] (0.2,2.3) -- (0.8,1.7) node [right]{$E_{4}$};
            \draw [red, line width = 1 pt] (2.8,0.2) -- (2.2,0.8) node [left]{$E_{2}$};
			\draw [blue, line width = 1 pt] (3.1,1.6) -- (2.1,2.6) node [above] {$E_{5}-E_{6}$};
            \draw [blue, line width = 1 pt] (2.3,1.8) -- (3.5,3) node [above right] {$E_{6}-E_{7}$};
            \draw [blue, line width = 1 pt] (3.0,2.8) -- (3.8,2) node [right]{$E_{7}-E_{8}$};
            \draw [red, line width = 1 pt] (3.2,2.2) -- (3.6,2.6) node [ right] {$E_{8}$};
		\end{scope}
	\end{tikzpicture}
	\caption{The model Sakai surface for the d-$\mathrm{P}\left(A_2^{(1)}/E_6^{(1)}\right)$ example.}
	\label{fig:surface-e6}
\end{figure}
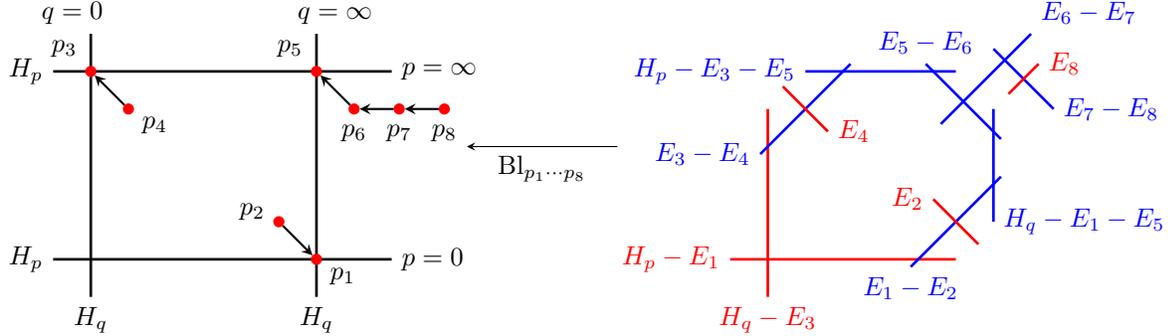
The parameterization of this point configuration in terms of root variables $a_{0},a_{1},a_{2}$ normalized by $a_{0}+a_{1}+a_{2}=1$ is given in \cite{KajNouYam:2017:GAOPE},
\begin{equation*}
p_{12}\left(\frac{1}{\varepsilon},-a_{2}\varepsilon\right)_{2}, \quad p_{34}\left(a_1\varepsilon,\frac{1}{\varepsilon}\right)_{2},\quad
p_{5678}\left(\frac{1}{\varepsilon},\frac{1}{\varepsilon}+t+(a_{1}+a_{2}-1)\varepsilon\right)_{4},
\end{equation*}
or, explicitly,
\begin{alignat}{2}
	&p_{1}\left(Q=\frac{1}{q}=0,p=0\right)&\leftarrow
    &p_{2}\left(u_1=\frac{1}{q}=0,v_1=qp=-a_2\right),\notag\\ &p_{3}\left(q=0,P=\frac{1}{p}=0\right)&\leftarrow
    &p_{4}\left(U_3=qp=a_1, V_3=\frac{1}{p}=0\right),\notag\\
	&p_{5}\left(Q=\frac{1}{q}=0,P=\frac{1}{p}=0\right)&\leftarrow
	&p_{6}\left(u_5=\frac{1}{q}=0,v_5=\frac{q}{p}=1\right)\\
	&&\leftarrow &p_{7}\left(u_6=\frac{1}{q}=0,v_6=\frac{q^2-qp}{p}=-t\right)\notag\\
	&&\leftarrow &p_{8}\left(u_7=\frac{1}{q}=0,v_7=\frac{q^3-q^2p+tqp}{p}=t^2+a_0\right).\notag
\end{alignat}
\subsection{The Extended Affine Weyl Symmetry Group} 
\label{sub:the_extended_affine_weyl_symmetry_group}
Recall that, the algebraic source of Painlev\'e dynamics stems from the birational representation of the extended affine Weyl symmetry group $\widetilde{W}\left(A_{2}^{(1)}\right)=\mathrm{Aut}\left(A_{2}^{(1)}\right)\ltimes W\left(A_{2}^{(1)}\right)$.

The affine Weyl group $W\left(A_{2}^{(1)}\right)$ is defined via generators $w_{i} = w_{\alpha_{i}}$ and relations that are encoded in the diagram on Figure~\ref{fig:a-roots-a2},
\begin{equation*}
	W\left(A_{2}^{(1)}\right) = W\left(\raisebox{-20pt}{\begin{tikzpicture}[
			elt/.style={circle,draw=black!100,thick, inner sep=0pt,minimum size=1.5mm}]
		\path 	(0,0.5) 	node 	(a0) [elt, label={[xshift=-10pt, yshift = -10 pt] $\alpha_{0}$} ] {}
		        (-0.575,-0.5) node 	(a1) [elt, label={[xshift=-10pt, yshift = -10 pt] $\alpha_{1}$} ] {}
		        (0.575,-0.5) 	node 	(a2) [elt, label={[xshift=10pt, yshift = -10 pt] $\alpha_{2}$} ] {};
		\draw [black,line width=1pt ] (a0) -- (a1) -- (a2)-- (a0);
	\end{tikzpicture}} \right)
	=
	\left\langle w_{0},w_{1}, w_{2}\ \left|\
	\begin{alignedat}{2}
    w_{i}^{2} = e,\quad  w_{i}\circ w_{j} &= w_{j}\circ w_{i}& &\text{ when
   				\raisebox{-0.08in}{\begin{tikzpicture}[
   							elt/.style={circle,draw=black!100,thick, inner sep=0pt,minimum size=1.5mm}]
   						\path   ( 0,0) 	node  	(ai) [elt] {}
   						        ( 0.5,0) 	node  	(aj) [elt] {};
   						\draw [black] (ai)  (aj);
   							\node at ($(ai.south) + (0,-0.2)$) 	{$\alpha_{i}$};
   							\node at ($(aj.south) + (0,-0.2)$)  {$\alpha_{j}$};
   							\end{tikzpicture}}}\\
    w_{i}\circ w_{j}\circ w_{i} &= w_{j}\circ w_{i}\circ w_{j}& &\text{ when
   				\raisebox{-0.17in}{\begin{tikzpicture}[
   							elt/.style={circle,draw=black!100,thick, inner sep=0pt,minimum size=1.5mm}]
   						\path   ( 0,0) 	node  	(ai) [elt] {}
   						        ( 0.5,0) 	node  	(aj) [elt] {};
   						\draw [black] (ai) -- (aj);
   							\node at ($(ai.south) + (0,-0.2)$) 	{$\alpha_{i}$};
   							\node at ($(aj.south) + (0,-0.2)$)  {$\alpha_{j}$};
   							\end{tikzpicture}}}
	\end{alignedat}\right.\right\rangle.
\end{equation*}
In our setting, this group is represented by actions on $\operatorname{Pic}(\mathcal{X})$ induced by reflections in the roots $\alpha_{i}$,
\begin{equation}\label{eq:root-refl}
	w_{i}(\mathcal{C}) = w_{\alpha_{i}}(\mathcal{C}) = \mathcal{C} - 2
	\frac{\mathcal{C}\bullet \alpha_{i}}{\alpha_{i}\bullet \alpha_{i}}\alpha_{i}
	= \mathcal{C} + \left(\mathcal{C}\bullet \alpha_{i}\right) \alpha_{i},\qquad \mathcal{C}\in \operatorname{Pic(\mathcal{X})}.
\end{equation}
Next, we need to extend this group by the Dynkin diagram automorphism group $\mathrm{Aut}\left(A_{2}^{(1)}\right)$, which is isomorphic to the dihedral group $\mathbb{D}_3$ (the symmetry group of a triangle). This group is generated by two reflections (here we use the standard cycle
notations for permutations),
	\begin{equation}
		\sigma_1=(\alpha_1\alpha_2)=(\delta_1\delta_5)(\delta_2\delta_4),\qquad
		\sigma_2=(\alpha_0\alpha_2)=(\delta_0\delta_1)(\delta_2\delta_6).
	\end{equation}
Then $\sigma_{i}$ act on the $\operatorname{Pic}(\mathcal{X})$ as
	\begin{equation*}
			\sigma_1=w_{\mathcal{E}_1-\mathcal{E}_3}\circ w_{\mathcal{E}_2-\mathcal{E}_4}\circ w_{\mathcal{H}_q-\mathcal{H}_p},\qquad
			\sigma_2=w_{\mathcal{E}_1-\mathcal{E}_7}\circ w_{\mathcal{E}_2-\mathcal{E}_8}\circ w_{\mathcal{H}_q-\mathcal{E}_5-\mathcal{E}_6}.		
	\end{equation*}
\begin{lemma}\cite{TD:2025:OPFTGWWAJADPE}\label{thm:bir-weyl-aut-a2}
	The generators of the extended affine Weyl group $\widetilde{W}\left(A_{2}^{(1)}\right)$ act to transform an initial point configuration
\begin{equation*}
\left(\begin{matrix} a_{0}, & a_{1}, & a_{2} \end{matrix}\ ; \ t \ ;
        \begin{matrix} q \\ p \end{matrix}\right)
\end{equation*}
as follows:
	\begin{alignat}{2}
		w_{0}&:
		\left(\begin{matrix} a_{0}, & a_{1}, & a_{2} \end{matrix}\ ; \ t \ ;
        \begin{matrix} q \\ p \end{matrix}\right)
		&&\mapsto
        \left(\begin{matrix} -a_{0}, & a_{1}+a_{0}, & a_{2}+a_{0} \end{matrix}\ ; \ t \ ;
        \begin{matrix} q-\frac{a_0}{q-p+t} \\ p-\frac{a_0}{q-p+t} \end{matrix}\right), \label{w0}\\
		w_{1}&:
        \left(\begin{matrix} a_{0}, & a_{1}, & a_{2} \end{matrix}\ ; \ t \ ;
        \begin{matrix} q \\ p \end{matrix}\right)
		&&\mapsto
        \left(\begin{matrix} a_{0}+a_{1}, & -a_{1}, & a_{2}+a_{1} \end{matrix}\ ; \ t \ ;
        \begin{matrix} q \\ p-\frac{a_1}{q} \end{matrix}\right), \label{w1}\\
		w_{2}&:
		\left(\begin{matrix} a_{0}, & a_{1}, & a_{2} \end{matrix}\ ; \ t \ ;
        \begin{matrix} q \\ p \end{matrix}\right)
		&&\mapsto
        \left(\begin{matrix} a_{0}+a_{2}, & a_{1}+a_{2}, & -a_{2} \end{matrix}\ ; \ t \ ;
        \begin{matrix} q+\frac{a_2}{p} \\ p \end{matrix}\right),\\
        \sigma_{1}&:
		\left(\begin{matrix} a_{0}, & a_{1}, & a_{2} \end{matrix}\ ; \ t \ ;
        \begin{matrix} q \\ p \end{matrix}\right)
		&&\mapsto
        \left(\begin{matrix} -a_{0}, & -a_{2}, & -a_{1} \end{matrix}\ ; \ t \ ;
        \begin{matrix} -p \\ -q \end{matrix}\right), \\
        \sigma_{2}&:
        \left(\begin{matrix} a_{0}, & a_{1}, & a_{2} \end{matrix}\ ; \ t \ ;
        \begin{matrix} q \\ p \end{matrix}\right)
		&&\mapsto
        \left(\begin{matrix} -a_{2}, & -a_{1}, & -a_{0} \end{matrix}\ ; \ t \ ;
        \begin{matrix} q \\ q-p+t \end{matrix}\right).
	\end{alignat}	
\end{lemma}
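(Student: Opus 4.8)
The plan is to verify by direct computation that the birational change of variables \eqref{eq:xy2qp}--\eqref{eq:qp2xy} conjugates the time-evolution of the standard system \eqref{eq:dPE6-KNY} to the recurrence \eqref{eq:xyn-evol}. I treat the discrete time as the index $n$, so that the overbar and underbar in \eqref{eq:dPE6-KNY} are the shifts $n\mapsto n\pm1$; under the proposed dictionary $a_{1}=-n$, $a_{2}=n+\lambda$ this means $a_{1}\mapsto a_{1}-1$, $a_{2}\mapsto a_{2}+1$, with $a_{0}$ and $t$ fixed, and the normalization $a_{0}+a_{1}+a_{2}=1$ then forces $a_{0}=1-\lambda$ for free. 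I would first confirm that \eqref{eq:xy2qp} and \eqref{eq:qp2xy} are mutually inverse: substituting $a_{1}=-n$ into $y=qp-a_{1}$ gives $qp=y-n$ and $a_{1}-qp=-y$, so that $x=q/(\sqrt{2}(a_{1}-qp))=-q/(\sqrt{2}\,y)$, whence $q=-\sqrt{2}\,xy$ and $p=(y+a_{1})/q=(n-y)/(\sqrt{2}\,xy)$, which is exactly \eqref{eq:qp2xy}.

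Next I would read the forward map $T\colon(q_{n},p_{n})\mapsto(q_{n+1},p_{n+1})$ off \eqref{eq:dPE6-KNY}: the first relation at level $n$ gives $q_{n+1}=p_{n}-t-a_{2}/p_{n}-q_{n}$ with $a_{2}=n+\lambda$, and the second relation at level $n+1$ (using $\underline{p_{n+1}}=p_{n}$ and $a_{1}=-(n+1)$) gives $p_{n+1}=q_{n+1}+t+a_{1}/q_{n+1}-p_{n}$. As a cross-check that this is the intended translation I would express $T$ as a word in the generators of Lemma~\ref{thm:bir-weyl-aut-a2} and confirm that it induces precisely $(a_{0},a_{1},a_{2};t)\mapsto(a_{0},a_{1}-1,a_{2}+1;t)$.

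I would then push $T$ through \eqref{eq:xy2qp}. For the first component, $x_{n+1}=-q_{n+1}/(\sqrt{2}\,y_{n+1})$ together with $y_{n+1}=q_{n+1}p_{n+1}+(n+1)$ and the $p_{n+1}$-formula yields, after the factor $q_{n+1}$ cancels, $x_{n+1}=p_{n}/(\sqrt{2}\,(q_{n}p_{n}+a_{2}))$; since $q_{n}p_{n}=y_{n}-n$ gives $q_{n}p_{n}+a_{2}=y_{n}+\lambda$ and $x_{n}=-q_{n}/(\sqrt{2}\,y_{n})$, this is exactly $x_{n}x_{n+1}=(n-y_{n})/(2y_{n}(y_{n}+\lambda))$, the first line of \eqref{eq:xyn-evol}. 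For the second component the same substitution gives $y_{n+1}=-q_{n+1}(q_{n}p_{n}+a_{2})/p_{n}$; writing $A:=q_{n}p_{n}+a_{2}=y_{n}+\lambda$, $x_{n+1}=p_{n}/(\sqrt{2}A)$ and $t=s/\sqrt{2}$, the sum $y_{n}+y_{n+1}$ collapses, via the single identity $a_{2}+q_{n}p_{n}=A$, to $-(2\lambda x_{n+1}^{2}-sx_{n+1}-1)/(2x_{n+1}^{2})$, the second line of \eqref{eq:xyn-evol}. Running the substitution backwards with \eqref{eq:qp2xy} recovers \eqref{eq:dPE6-KNY}, so the two systems are conjugate and the theorem follows.

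The algebra is short once everything is in place, so the real difficulty lies not there but in extracting the forward map: I must fix the translation direction and, crucially, evaluate the two relations of \eqref{eq:dPE6-KNY} at the correct levels, taking $a_{2}$ at step $n$ but $a_{1}$ at step $n+1$. An off-by-one in either the root-variable shift or the evaluation level would spoil the two cancellations — the vanishing of $q_{n+1}$ in $x_{n+1}$ and the identity $a_{2}+q_{n}p_{n}=A$ — on which the whole matching rests, which is precisely why the generator-word cross-check via Lemma~\ref{thm:bir-weyl-aut-a2} is worth carrying out.
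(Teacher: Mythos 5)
There is a genuine gap here, and it is structural: you have not proved the statement you were asked to prove. The lemma asserts the explicit birational action of each of the five generators $w_{0}$, $w_{1}$, $w_{2}$, $\sigma_{1}$, $\sigma_{2}$ of $\widetilde{W}\left(A_{2}^{(1)}\right)$ on the data $(a_{0},a_{1},a_{2}\,;\,t\,;\,q,p)$. What you verify instead is that the change of variables \eqref{eq:xy2qp}--\eqref{eq:qp2xy} conjugates the standard equation \eqref{eq:dPE6-KNY} to the recurrence \eqref{eq:xyn-evol} --- that is a (plausible and essentially correct) direct-computation proof of Theorem~\ref{thm:coordinate-change}, not of this lemma. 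Your argument nowhere derives the formulas for $w_{0}$, $w_{1}$, $w_{2}$, $\sigma_{1}$, $\sigma_{2}$; the only place they enter is your proposed ``cross-check,'' where you would decompose the translation $T$ as a word in the generators of Lemma~\ref{thm:bir-weyl-aut-a2} --- which presupposes the lemma and is therefore circular as a proof of it. Note also that verifying a single composite map is strictly weaker than the lemma: many different assignments of birational maps to the generators would compose to the same translation, so no amount of checking $T$ alone can pin down the five individual formulas.

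A proof of the lemma must be carried out generator by generator, and this is what the paper does for the representative case \eqref{w1}: one computes the induced linear action of the reflection on $\operatorname{Pic}(\mathcal{X})$ via \eqref{eq:root-refl} (e.g.\ $w_{1}(\mathcal{H}_{p})=\mathcal{H}_{q}+\mathcal{H}_{p}-\mathcal{E}_{3}-\mathcal{E}_{4}$, $w_{1}(\mathcal{E}_{3})=\mathcal{H}_{q}-\mathcal{E}_{4}$, with the remaining $\mathcal{E}_{i}$ fixed), deduces that $\overline{p}$ is a coordinate on the pencil of $(1,1)$-curves through the degeneration cascade $p_{3}\leftarrow p_{4}$, so that $\overline{q}$ and $\overline{p}$ are M\"obius functions of $q$ and $qp-a_{1}$, reads off the root-variable change $(\overline{a}_{0},\overline{a}_{1},\overline{a}_{2})=(a_{0}+a_{1},-a_{1},a_{2}+a_{1})$, and then fixes all the M\"obius constants by demanding that classes fixed by $w_{1}$ (such as $\mathcal{E}_{1}-\mathcal{E}_{2}$, $\mathcal{E}_{5}-\mathcal{E}_{6}$, $\mathcal{E}_{6}-\mathcal{E}_{7}$, $\mathcal{E}_{7}-\mathcal{E}_{8}$) correspond to fixed configurations of the map, yielding $\overline{q}=q$, $\overline{p}=p-a_{1}/q$. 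If you prefer a route avoiding Picard-lattice computations, the honest elementary substitute is to check directly that each stated map carries the parameterized eight-point configuration (including the infinitely-near points $p_{6},p_{7},p_{8}$) with root variables $(a_{0},a_{1},a_{2})$ to the same configuration with the transformed root variables, and that the maps satisfy the $A_{2}^{(1)}$ Coxeter and diagram-automorphism relations; your computation touches none of this.
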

\begin{proof}
This proof is standard, for details, refer to \cite{DzhFilSto:2020:RCDOPHWDPE} and \cite{DzhTak:2018:OSAOSGTODPE}. Here, we only briefly outline the computation of \eqref{w1}. The reflection $w_1$ in the root $\alpha_{1}=\mathcal{H}_q-\mathcal{E}_3-\mathcal{E}_4$
acts on the $\operatorname{Pic}(\mathcal{X})$ by
\begin{equation*}
w_{1}(\mathcal{H}_q)=\mathcal{H}_q,\quad w_{1}(\mathcal{H}_p)=\mathcal{H}_q+\mathcal{H}_p-\mathcal{E}_3-\mathcal{E}_4,
\end{equation*}
and
\begin{equation*}
w_{1}(\mathcal{E}_3)=\mathcal{H}_q-\mathcal{E}_4,\quad
w_{1}(\mathcal{E}_4)=\mathcal{H}_q-\mathcal{E}_3,\quad
w_{1}(\mathcal{E}_i)=\mathcal{E}_i,\quad i=1,2,5,6,7,8.
\end{equation*}
Thus, we are looking for a mapping $w_1$ which, in the affine chart $(q,p)$, is defined by a formula $w_1(q,p)=(\overline{q},\overline{p})$, so that
\begin{equation*}
w_{1}^*(\mathcal{H}_{\overline{q}})=\mathcal{H}_q,\quad w_{1}^*(\mathcal{H}_{\overline{p}})=\mathcal{H}_q+\mathcal{H}_p-\mathcal{E}_3-\mathcal{E}_4.
\end{equation*}
Hence, up to M\"{o}bius transformations, $\overline{q}$ coincides with $q$, and $\overline{p}$ is a coordinate on a pencil of $(1,1)$-curves passing through the degeneration cascade $p_3(0,\infty)\leftarrow p_4(U_3=a_1,V_3=0)$, then
\begin{equation*}
|H_{\overline{p}}|=\{Aqp+Bq+Cp+D=0\}=\{A(qp-a_1)+Bq=0\}.
\end{equation*}
Considering M\"{o}bius transformations, we obtain
\begin{equation*}
\overline{q}=\frac{Aq+B}{Cq+D}, \quad
\overline{p}=\frac{Kq+L(qp-a_1)}{Mq+N(qp-a_1)},
\end{equation*}
where $A,\ldots,N$ are constants to be determined. We can also get the root variables change as $\overline{a}_0=a_0+a_1$, $\overline{a}_1=-a_1$ and $\overline{a}_2=a_2+a_1$.

Since $w_1(\mathcal{E}_1-\mathcal{E}_2)=\mathcal{E}_1-\mathcal{E}_2$, $(\overline
{q},\overline{p})(\infty,0)=(\infty,0)$, we find that $C=0,K=0,D=1$ and $L=1$. Similarly, from $w_1(\mathcal{E}_3-\mathcal{E}_4)=\mathcal{E}_3-\mathcal{E}_4$, it follows that $(\overline
{q},\overline{p})(0,\infty)=(0,\infty)$, hence $B=0$. From $w_1(\mathcal{E}_5-\mathcal{E}_6)=\mathcal{E}_5-\mathcal{E}_6$, we further find that $(\overline
{q},\overline{p})(\infty,\infty)=(\infty,\infty)$, so $N=0$. Thus,
\begin{equation*}
\overline{q}=Aq, \quad
\overline{p}=\frac{qp-a_1}{Mq},
\end{equation*}
from $w_1(\mathcal{E}_2)=\mathcal{E}_2$, we derive $\frac{A}{M}=1$, from $w_1(\mathcal{E}_6-\mathcal{E}_7)=\mathcal{E}_6-\mathcal{E}_7$, we get $AM=1$. Finally, from $w_1(\mathcal{E}_7-\mathcal{E}_8)=\mathcal{E}_7-\mathcal{E}_8$, we obtain $A=M=1$.
\end{proof}

\subsection{Discrete Painlev\'e equations on the $E_{6}^{(1)}$ surface} 
\label{sub:Standard_discrete_d_dpain_a__2_1_e__6_1_equation}
In \cite{KajNouYam:2017:GAOPE}, the standard example of a discrete Painlev\'{e} equation on the $E_{6}^{(1)}$-surface is given in Section 8.1.18 equation (8.25) as
\begin{equation}\label{eq:dPE6-KNY}
	\overline{q} + q = p - t - \frac{a_2}{p},\qquad
	p + \underline{p} = q + t + \frac{a_1}{q},
\end{equation}
with the root variables evolution and normalization as follows
\begin{equation}\label{eq:dPE6-rv-evol}
	\overline{a}_{0} = a_{0}, \quad \overline{a}_{1} = a_{1}-1, \quad \overline{a}_{2} = a_{2} + 1,\qquad
	 a_{0} + a_{1} + a_{2} = 1.
\end{equation}
From the evolution of the root variables (\ref{eq:dPE6-rv-evol}) we can immediately see that the corresponding translation on the root lattice is
\begin{equation}
	\varphi_{*}: \upalpha =  \langle \alpha_{0}, \alpha_{1}, \alpha_{2}  \rangle
	\mapsto \varphi_{*}(\upalpha) = \upalpha + \langle 0,1,-1 \rangle \delta.
\end{equation}
Using the standard techniques, see \cite{DzhTak:2018:OSAOSGTODPE} for a detailed example, we get the following decomposition of $\varphi$ in terms of the generators of $\widetilde{W}\left(A_{2}^{(1)}\right)$:
\begin{equation}
	\varphi = \sigma_{1}\sigma_{2}w_{0}w_{2}.
\end{equation}
\begin{remark}
With the following relabeling of the root basis,
\begin{equation*}
b_{0}=a_{2}, \quad b_{2}=a_{1}, \quad b_{1}=a_{0},
\end{equation*}
and the substitutions
\begin{equation*}
f=-q, \quad g=p,
\end{equation*}
equation \eqref{eq:dPE6-KNY} is found to coincide with equations (2.39)-(2.40) in \cite{Sak:2007:PDPEATLF}, i.e.
\begin{equation}\label{eq:dPE6-SK7}
	f + \overline{f} = t - g + \frac{b_0}{g},\qquad
	g + \overline{g} = t - \overline{f} - \frac{b_2-1}{\overline{f}}.
\end{equation}
This system of equations is referred to as the d-$\mathrm{P_{II}}$ equation in Section 7 of \cite{Sak:2001:RSAWARSGPE}.
\end{remark}


\section{The Identification Procedure} 
\label{sec:the_identification_procedure}
In this Section, we follow the reduction procedures introduced in \cite{DzhFilSto:2020:RCDOPHWDPE} to establish the correspondence between the recurrence relation \eqref{eq:xyn-evol} and the standard example \eqref{eq:dPE6-KNY}.
\subsection{The Singularity Structure} 
\label{sub:the_singularity_structure}
In the geometric analysis of discrete Painlev\'{e} equations, the first step is to understand the singularity structure of the system. The recurrence relation \eqref{eq:xyn-evol} induces two fundamental mappings, the \emph{forward mapping} $\psi_{1}^{(n)}: (x_{n},y_{n})\mapsto (x_{n+1},y_{n})$ and the
\emph{backward mapping} $\psi_{2}^{(n)}:(x_{n},y_{n})\mapsto (x_{n},y_{n-1})$. In this paper, we focus on the composed mapping $\psi^{(n)} = \left(\psi_{2}^{(n+1)}\right)^{-1} \circ \psi_{1}^{(n)}: (x_{n},y_{n})\mapsto (x_{n+1},y_{n+1})$. We put
$x:=x_{n}$, $\overline{x}: = x_{n+1}$,
$y:=y_{n}$, $\overline{y}: = y_{n+1}$  and omit the index
$n$ in the mapping notation. The map $\psi:(x,y)\mapsto (\overline{x},\overline{y})$ then becomes
\begin{equation}\label{eq:fwd}
	\left\{
	\begin{aligned}
		\overline{x} &= \frac{n-y}{2xy(y+\lambda)},\\
		\overline{y} &= -\frac{(y+\lambda)\left(n^2-n(2+sx)y+(1+sx-2\lambda x^2)y^2-2x^2y^3\right)}{(y-n)^2}.
	\end{aligned}
	\right.
\end{equation}
To compactify the affine complex plane $\mathbb{C}\times\mathbb{C}$ into $\mathbb{P}^1\times\mathbb{P}^1$, we introduce three supplementary coordinate charts $(X,y)$, $(x,Y)$ and $(X,Y)$, where $X=1/x$, $Y=1/y$. By examining the coordinate where both the numerator and the denominator of the mapping vanish, we immediately see the following base points,
\begin{equation*}
q_{1}(0,n), \quad q_{2}(\infty,-\lambda), \quad q_{3}(\infty,0), \quad q_{4}(0,\infty).
\end{equation*}
At each of these base points, we perform the blowup procedure, see, e.g., \cite{Sha:2013:BAG1}, which entails introducing two new local coordinate charts, $(u_i,v_i)$ and $(U_i,V_i)$, in the neighborhood of the base point $q_i(x_i,y_i)$. The variable transformations are defined as
\begin{equation*}
x=x_i+u_i=x_i+U_iV_i, \quad y=y_i+u_iv_i=y_i+V_i.
\end{equation*}
The coordinates $v_i=1/U_i$ parameterize all possible slopes of lines through $q_i$, and so this variable change `separates' all curves passing through $q_i$ based on their slopes. The blowup procedure induces a bijection on the punctured neighborhood of $q_i$, replacing $q_i$ with a projective line $\mathbb{P}^1$ (\emph{exceptional divisor} $F_i$), locally defined by $u_i=0$ and $V_i=0$ in the blowup charts. Extending the mapping to these charts via coordinate substitutions may reveal new base points on $F_i$ (where $u_i=V_i=0$). For discrete Painlev\'{e} case, iterative blowups finitely resolve all base points, then the following lemma is established.

\begin{lemma}
	\label{lem:base-pt-semi-laguerre}
The base points of the mapping \eqref{eq:fwd}  
are
\begin{alignat}{2}\label{eq-base-pt-semi-laguerre}
	&q_{1}(x=0,y=n),&\quad &\quad\notag\\
    &q_{2}\left(X=\frac{1}{x}=0,y=-\lambda\right)&,\quad &q_{3}\left(X=\frac{1}{x}=0,y=0\right),\notag\\
	&q_{4}\left(x=0,Y = \frac{1}{y}= 0\right)&\leftarrow
	&q_{5}\left(u_4=x=0,v_4=\frac{1}{xy}=0\right)\notag\\
	&&\leftarrow &q_{6}\left(u_5=x=0,v_5=\frac{1}{x^2y}=2\right)\\
	&&\leftarrow &q_{7}\left(u_6=x=0,v_6=\frac{1-2x^2y}{x^3y}=-2s\right)\notag\\
    &&\leftarrow &q_{8}\left(u_7=x=0,v_7=\frac{1-2x^2y+2sx^3y}{x^4y}=2(s^2+2(n+\lambda-1))\right).\notag
\end{alignat}
Considering the inverse mapping does not add any new base points.
\end{lemma}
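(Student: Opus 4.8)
The plan is to treat $\psi$ as a birational self-map of $\mathbb{P}^1\times\mathbb{P}^1$ and to locate every point of indeterminacy (base point), resolving each by iterated blowups until the induced map on the resulting surface is an isomorphism. Since the $E_6^{(1)}$ surface arises by blowing up eight points of $\mathbb{P}^1\times\mathbb{P}^1$, I expect to find exactly eight base points, counted together with the infinitely-near ones produced in a cascade, and the lemma records precisely this list.

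First I would cover $\mathbb{P}^1\times\mathbb{P}^1$ by the four affine charts $(x,y)$, $(X,y)$, $(x,Y)$, $(X,Y)$ with $X=1/x$, $Y=1/y$, and in each chart rewrite both components $\overline{x},\overline{y}$ of the map \eqref{eq:fwd} (or their reciprocals) as ratios of polynomials. A point is a base point precisely when at least one component degenerates to the indeterminate form $0/0$ there. In the finite chart, the numerator $n-y$ and denominator $2xy(y+\lambda)$ of $\overline{x}$ vanish simultaneously only where the lines $x=0$, $y=0$, $y=-\lambda$ meet $y=n$; combined with the vanishing of $(y-n)^2$ in the denominator of $\overline{y}$ this isolates $q_1(0,n)$, and substituting $y=n$ shows the numerator of $\overline{y}$ collapses to $2x^2n^2(n+\lambda)^2$, which vanishes at $x=0$, confirming that $q_1$ is genuinely indeterminate. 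Passing to the chart at $x=\infty$ and clearing $X$ reveals $q_2(X=0,y=-\lambda)$ and $q_3(X=0,y=0)$ in the same manner, while the chart at $y=\infty$ produces the initial point $q_4(x=0,Y=0)$ of the cascade.

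Next comes the core computation: resolving each base point by the blowup substitution $x=x_i+U_iV_i$, $y=y_i+V_i$ (with $v_i=1/U_i$ recording the slope of the approaching curve) and re-expanding the map. For the isolated points $q_1,q_2,q_3$ a single blowup suffices, and one verifies that no new indeterminacy survives on the exceptional divisor. The point $q_4$, however, generates a length-five cascade $q_4\leftarrow q_5\leftarrow q_6\leftarrow q_7\leftarrow q_8$: after each blowup the leading-order expansion of \eqref{eq:fwd} still degenerates at a single point of the new exceptional line, whose location is pinned down by the successive slope coordinates $v_4=1/(xy)$, $v_5=1/(x^2y)$, $v_6=(1-2x^2y)/(x^3y)$, $v_7=(1-2x^2y+2sx^3y)/(x^4y)$. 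Computing the critical values $v_4=0$, $v_5=2$, $v_6=-2s$, $v_7=2(s^2+2(n+\lambda-1))$ amounts to matching the first four coefficients of the Laurent-type expansion of the map along the direction $x\to 0$, $y\to\infty$; the process terminates once $q_8$ has been blown up, where the map finally becomes regular.

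The main obstacle is the bookkeeping of this cascade: at each stage one must substitute the blowup coordinates into both components of \eqref{eq:fwd}, expand to the correct order, and read off both the next base point and the image of the exceptional curve, taking care that the denominators have been cleared consistently. Once the cascade is resolved I would tally the blowups, the three isolated points $q_1,q_2,q_3$ together with the five-point cascade giving exactly eight, matching the $E_6^{(1)}$ surface. Finally I would verify the closing claim that the inverse map introduces no further base points; this can be done either by repeating the indeterminacy analysis for $\psi^{-1}$ directly, or, more economically, by noting that $\psi$ already lifts to an isomorphism of the surface blown up at these eight points, so that its inverse is automatically regular there.
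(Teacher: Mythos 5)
Your proposal is correct and follows essentially the same route as the paper: the paper establishes this lemma by exactly the chart-by-chart indeterminacy analysis and iterated blowup cascade you describe (it lays out this procedure immediately before the lemma and asserts the resulting list of points $q_1,\dots,q_8$, since the computation is standard). One caveat: your ``more economical'' option for the closing claim is circular as stated, because knowing that $\psi$ lifts to an isomorphism of the blown-up surfaces already presupposes that $\psi^{-1}$ is regular there --- it works only if you add the Picard-rank argument (a birational \emph{morphism} between smooth surfaces of equal Picard number is an isomorphism), whereas your first option, the direct indeterminacy check for $\psi^{-1}$, is what the paper intends.
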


After the blowup of all eight base points $q_i$, we obtain a (family of) rational algebraic surfaces parameterized by $\lambda$, $s$ and $n$ (the coordinates of the base points), denoted as $\mathcal{X}=\mathcal{X}_{\mathbf{b}}$ with $\mathbf{b}=\{\lambda,s,n\}$.


\subsection{The Induced Mapping on $\operatorname{Pic}(\mathcal{X})$} 
\label{sub:the_induced_mapping_on_operatorname_pic_mathcal_x}
In the identification procedure, the next step involves computing the induced mapping on the Picard lattice. For the product space $\mathbb{P}^1\times\mathbb{P}^1$, its Picard lattice is generated by the linear equivalence classes of the coordinate lines. Specifically, we have $\operatorname{Pic}(\mathbb{P}^{1} \times \mathbb{P}^{1}) = \operatorname{Span}_{\mathbb{Z}}\{\mathcal{H}_{x},\mathcal{H}_{y}\}$, where $\mathcal{H}_{x} = [H_{x=a}]$ denotes the class of a \emph{vertical} line and  $\mathcal{H}_{y} = [H_{y=b}]$ denotes the class of a \emph{horizontal} line on $\mathbb{P}^{1}\times \mathbb{P}^{1}$. Each blowup procedure at a base point $q_{i}$ adds the class $\mathcal{F}_{i} = [F_{i}]$
of the \emph{exceptional divisor} of the blowup, expanding the Picard lattice to
\begin{equation*}
\operatorname{Pic}(\mathcal{X}) = \operatorname{Span}_{\mathbb{Z}}\{\mathcal{H}_{x},\mathcal{H}_{y},\mathcal{F}_{1},\ldots, \mathcal{F}_{8}\}.
\end{equation*}
$\operatorname{Pic}(\mathcal{X})$ is equipped with the symmetric bilinear \emph{intersection form} given by
\begin{equation}\label{eq:int-form}
\mathcal{H}_{x}\bullet \mathcal{H}_{x} = \mathcal{H}_{y}\bullet \mathcal{H}_{y} = \mathcal{H}_{x}\bullet \mathcal{F}_{i} =
\mathcal{H}_{y}\bullet \mathcal{F}_{j} = 0,\qquad \mathcal{H}_{x}\bullet \mathcal{H}_{y} = 1,\qquad  \mathcal{F}_{i}\bullet \mathcal{F}_{j} = - \delta_{ij}	
\end{equation}
on the generators, and then extended by the linearity.

The mapping $\psi$ induces a linear map on $\operatorname{Pic}(\mathcal{X})$, note that $\operatorname{Pic}(\mathcal{X})$ and $\operatorname{Pic}(\overline{\mathcal{X}})$ are clearly canonically isomorphic, so we often use the notation $\operatorname{Pic}(\mathcal{X})$. We denote by $\overline{F}_{i}$ the class of the exceptional divisor obtained by the blowup at
$\overline{q}_{i} = \psi(q_{i})$ and use the notation $\mathcal{F}_{i\cdots j}=\mathcal{F}_{i}+ \cdots + \mathcal{F}_{j}$. This computation follows standard computation detailed in \cite{DzhTak:2018:OSAOSGTODPE,DzhFilSto:2020:RCDOPHWDPE}, so we only summarize the result here.
\begin{lemma}\label{lem:dyn}
	The action of the mapping $\psi_{*}: \operatorname{Pic}(\mathcal{X})\to \operatorname{Pic}(\overline{\mathcal{X}})$
	is given by
	\begin{align*}
\begin{aligned}
&\mathcal{H}_x\mapsto4\overline{\mathcal{H}}_x+2\overline{\mathcal{H}}_y-\overline{\mathcal{F}}_{23}-2\overline{\mathcal{F}}_{456}-\overline{\mathcal{F}}_{78}, \\
&\mathcal{F}_1\mapsto2\overline{\mathcal{H}}_x+\overline{\mathcal{H}}_y-\overline{\mathcal{F}}_{45678},
\\
&\mathcal{F}_2\mapsto2\overline{\mathcal{H}}_x+\overline{\mathcal{H}}_y-\overline{\mathcal{F}}_{34567},
\\
&\mathcal{F}_3\mapsto2\overline{\mathcal{H}}_x+\overline{\mathcal{H}}_y-\overline{\mathcal{F}}_{24567},
\\
&\mathcal{F}_4\mapsto\overline{\mathcal{H}}_x+\overline{\mathcal{H}}_y-\overline{\mathcal{F}}_{456},
\end{aligned}
&&
\begin{aligned}
&\mathcal{H}_y\mapsto2\overline{\mathcal{H}}_x+\overline{\mathcal{H}}_y-\overline{\mathcal{F}}_{4567}, \\
&\mathcal{F}_5\mapsto\overline{\mathcal{H}}_x-\overline{\mathcal{F}}_{6},
\\
&\mathcal{F}_6\mapsto\overline{\mathcal{H}}_x-\overline{\mathcal{F}}_{5},
\\
&\mathcal{F}_7\mapsto\overline{\mathcal{H}}_x-\overline{\mathcal{F}}_{4},
\\
&\mathcal{F}_8\mapsto\overline{\mathcal{F}}_{1}.
\end{aligned}
\end{align*}
The evolution of parameters (and hence, the base points) is given by
	$\mathbf{b}=\{\lambda, s, n\}\mapsto \overline{\mathbf{b}}=\{\lambda,s,n+1\}$.
\end{lemma}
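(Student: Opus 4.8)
The plan is to exploit that, after the eight blowups of Lemma~\ref{lem:base-pt-semi-laguerre}, the birational map $\psi$ becomes a genuine isomorphism $\mathcal{X}\to\overline{\mathcal{X}}$, so that $\psi_{*}$ is a $\mathbb{Z}$-linear isometry of the Picard lattice preserving the intersection form \eqref{eq:int-form} and carrying the anticanonical class $-\mathcal{K}_{\mathcal{X}}=2\mathcal{H}_{x}+2\mathcal{H}_{y}-\mathcal{F}_{1\cdots 8}$ to its counterpart on $\overline{\mathcal{X}}$. It therefore suffices to determine the images of the ten generators $\mathcal{H}_{x},\mathcal{H}_{y},\mathcal{F}_{1},\dots,\mathcal{F}_{8}$. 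Throughout, the target base points $\overline{q}_{j}$ are the base points of Lemma~\ref{lem:base-pt-semi-laguerre} with $n$ replaced by $n+1$, in accordance with the parameter evolution $\mathbf{b}\mapsto\overline{\mathbf{b}}$ that the final assertion of Lemma~\ref{lem:dyn} records. The class $\psi_{*}(\mathcal{D})$ of an effective $\mathcal{D}=a\mathcal{H}_{x}+b\mathcal{H}_{y}-\sum_{i}m_{i}\mathcal{F}_{i}$ is read off from the image curve $\psi(C)$ of a generic representative $C$: the coefficients of $\overline{\mathcal{H}}_{x},\overline{\mathcal{H}}_{y}$ are its bidegree, and the coefficient of $-\overline{\mathcal{F}}_{j}$ is $\operatorname{mult}_{\overline{q}_{j}}\psi(C)$.

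First I would push forward the two line classes. For $\mathcal{H}_{x}$ I take a generic vertical line $\{x=c\}$, parametrized by $y$, and substitute into \eqref{eq:fwd}: counting solutions of $\bar{x}(c,y)=a$ in $y$ shows the image meets a generic vertical line in two points, while counting solutions of $\bar{y}(c,y)=b$ shows it meets a generic horizontal line in four points, which fixes the bidegree $4\overline{\mathcal{H}}_{x}+2\overline{\mathcal{H}}_{y}$. The passage through the $\overline{q}_{j}$ is then found by substituting $(\bar{x},\bar{y})$ into the local equations of each base point (including the cascade coordinates $v_{4},\dots,v_{7}$ at $\overline{q}_{4}$) and reading the vanishing order, which produces the correction $-\overline{\mathcal{F}}_{23}-2\overline{\mathcal{F}}_{456}-\overline{\mathcal{F}}_{78}$. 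The same recipe applied to $\{y=b\}$ (degree one in $x$ for $\bar{x}=a$, degree two for $\bar{y}=c$) gives $\psi_{*}(\mathcal{H}_{y})=2\overline{\mathcal{H}}_{x}+\overline{\mathcal{H}}_{y}-\overline{\mathcal{F}}_{4567}$.

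For the exceptional classes I would resolve the indeterminacy of $\psi$ at each $q_{i}$ directly in the blowup charts $(u_{i},v_{i}),(U_{i},V_{i})$ of Lemma~\ref{lem:base-pt-semi-laguerre}. Writing $x=x_{i}+U_{i}V_{i}$, $y=y_{i}+V_{i}$ and letting the slope coordinate $v_{i}=1/U_{i}$ run over $F_{i}$ (so $V_{i}=0$), the restriction of $\psi$ expresses $(\bar{x},\bar{y})$ as rational functions of the slope; their joint bidegree and their multiplicities at the $\overline{q}_{j}$ yield $\psi_{*}(\mathcal{F}_{i})$. For the isolated points $q_{1},q_{2},q_{3}$ this is a one-step computation giving the bidegree-$(2,1)$ images $2\overline{\mathcal{H}}_{x}+\overline{\mathcal{H}}_{y}-\overline{\mathcal{F}}_{\cdots}$, while for $q_{4}$ through $q_{8}$ the blowups must be carried out iteratively along the cascade $q_{4}\leftarrow\cdots\leftarrow q_{8}$, producing images of decreasing bidegree and ending with the clean blowdown behavior $\psi_{*}(\mathcal{F}_{8})=\overline{\mathcal{F}}_{1}$.

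The main obstacle is precisely this cascade bookkeeping: tracking a curve through four successive blowups, keeping the local charts mutually consistent, and certifying that the image meets exactly the claimed $\overline{q}_{j}$ with exactly the stated multiplicities and no others, so that no spurious exceptional contribution is overlooked. To control errors I would lean on the rigidity of the answer: $\psi_{*}$ must preserve every pairing $\mathcal{C}\bullet\mathcal{C}'$ and the class $-\mathcal{K}_{\mathcal{X}}$, and must respect the surface root basis of Figure~\ref{fig:d-roots-e61}. Imposing these isometry constraints (for instance, checking $\psi_{*}(\mathcal{H}_{x})^{2}=0$ and $\psi_{*}(\mathcal{F}_{i})^{2}=-1$) both reduces the number of independent local computations and gives an unambiguous test that the tabulated images are correct and complete. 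Finally, verifying that the induced parameter map is $\{\lambda,s,n\}\mapsto\{\lambda,s,n+1\}$ confirms that the target is the level-$(n+1)$ surface, closing the argument.
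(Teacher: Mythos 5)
Your proposal is correct and is essentially the approach the paper takes: the paper offers no proof of Lemma~\ref{lem:dyn} beyond deferring to the standard computation in \cite{DzhTak:2018:OSAOSGTODPE,DzhFilSto:2020:RCDOPHWDPE}, and that standard computation is exactly what you outline --- after the eight blowups $\psi$ lifts to an isomorphism, the images of $\mathcal{H}_x,\mathcal{H}_y$ are read off from the bidegrees and base-point multiplicities of the image curves of generic coordinate lines (your degree counts from \eqref{eq:fwd} indeed give $4\overline{\mathcal{H}}_x+2\overline{\mathcal{H}}_y$ and $2\overline{\mathcal{H}}_x+\overline{\mathcal{H}}_y$), the $\mathcal{F}_i$ are handled in the blowup charts along the cascade, and the isometry/anticanonical checks you propose all pass on the tabulated answer. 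One trivial correction to your error-control remark: at this stage the surface root basis that $\psi_*$ must respect is the one of Figure~\ref{fig:d-roots-slw} (written in the $\mathcal{H}_x,\mathcal{H}_y,\mathcal{F}_i$ basis), not that of Figure~\ref{fig:d-roots-e61}, since the identification with the standard model is only made later in Lemma~\ref{lem:change-basis-pre}.
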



\subsection{The Surface Type} 
\label{sub:the_surface_type}
Given that the mapping is fully regularized via eight blowups, it naturally fits into the discrete Painlev\'{e} equations framework. To determine the algebraic surface type, we analyze the configuration of irreducible components of the bi-degree $(2,2)$ curve $\Gamma$ that contains the base points $q_{1},\ldots,q_{8}$ of the mapping. For generic parameters, the proper transform of $\Gamma$ under these blowups is the unique \emph{anti-canonical divisor} $-K_{\mathcal{X}}$, which corresponds to the polar divisor of a symplectic form $\omega$ and serves as a critical invariant for algebraic surface classification. The projection mapping
\begin{equation*}
	\eta: \mathcal{X}_{\mathbf{b}} = \operatorname{Bl}_{q_{1}\cdots q_{8}}(\mathbb{P}^{1} \times \mathbb{P}^{1}) \to \mathbb{P}^{1} \times \mathbb{P}^{1},
\end{equation*}
formally establishes the birational equivalence between the singular initial space and the regularized blown-up surface $\mathcal{X}_{\mathbf{b}}$, where the eight blowups resolve base-point singularities to embed the mapping within the discrete Painlev\'{e} framework.
\begin{lemma}
The base points $q_{1},\ldots,q_{8}$ of the mapping \eqref{eq:fwd} are situated on the bi-quadratic curve $\Gamma$, which is defined in the affine chart by the equation $x=0$. The homogeneous equation of $\Gamma$ is $x^{0}x^{1}(y^{1})^2= 0$, where $x = x^{0}/x^{1}$ and $y=y^{0}/y^{1}$, confirming that $\Gamma$ is indeed a bi-quadratic curve. It is important to note that certain points exhibit infinitely-close degeneration cascades. The irreducible components $d_{i}$ of the \emph{proper transform} $-K_{\mathcal{X}}$ of $\Gamma$,
\begin{equation*}
	-K_{\mathcal{X}} = 2 H_{x} + 2 H_{y} - F_{1} -\cdots - F_{8} =d_{0} + d_{1} + 2 d_{2} + 3 d_{3} + 2d_{4} + d_{5} + 2d_{6},
\end{equation*}
are given by
\begin{equation}
\begin{aligned}
d_{0} &= F_{7} - F_{8}, \quad d_{1} = H_{x} - F_{2} - F_{3}, \quad d_{2} = H_{y} - F_{4} - F_{5}, \quad d_{3} = F_{5} - F_{6}, \\
d_{4} &= F_{4} - F_{5}, \quad d_{5} = H_{x} - F_{1} - F_{4}, \quad d_{6} = F_{6} - F_{7},
\end{aligned}
\end{equation}
they define the \emph{surface root basis} $\delta_{1},\ldots, \delta_{6}$ (where $\delta_i=[d_i]$) of $-2$-classes in $\operatorname{Pic}(\mathcal{X})$
whose configuration is described by the Dynkin diagram of type $E_{6}^{(1)}$:
\begin{figure}[H]
\begin{equation}\label{eq:d-roots-slw}
	\raisebox{-32.1pt}{\begin{tikzpicture}[
			elt/.style={circle,draw=black!100,thick, inner sep=0pt,minimum size=2mm}]
		\path 	(0,2) 	node 	(d0) [elt, label={[xshift=-10pt, yshift = -10 pt] $\delta_{0}$} ] {}
		        (-2,0) node 	(d1) [elt, label={[xshift=0pt, yshift = -20 pt] $\delta_{1}$} ] {}
		        (-1,0) 	node  	(d2) [elt, label={[xshift=0pt, yshift = -20 pt] $\delta_{2}$} ] {}
		        ( 0,0) 	node  	(d3) [elt, label={[xshift=0pt, yshift = -20 pt] $\delta_{3}$} ] {}
		        ( 1,0) 	node  	(d4) [elt, label={[xshift=0pt, yshift = -20 pt] $\delta_{4}$} ] {}
		        ( 2,0) node 	(d5) [elt, label={[xshift=0pt, yshift = -20 pt] $\delta_{5}$} ] {}
                ( 0,1) node 	(d6) [elt, label={[xshift=-10pt, yshift = -10 pt] $\delta_{6}$}]
{};
		\draw [black,line width=1pt ] (d0) -- (d6) -- (d3) (d1) -- (d2) -- (d3)  (d3) -- (d4) -- (d5);
	\end{tikzpicture}} \qquad
\begin{aligned}
&\delta_0=\mathcal{F}_7-\mathcal{F}_8,
\\
&\delta_1=\mathcal{H}_x-\mathcal{F}_2-\mathcal{F}_3,\quad
\\
&\delta_2=\mathcal{H}_y-\mathcal{F}_4-\mathcal{F}_5,
\end{aligned}
\begin{aligned}
&\delta_3=\mathcal{F}_5-\mathcal{F}_6,
\\
&\delta_4=\mathcal{F}_4-\mathcal{F}_5,\quad
\\
&\delta_5=\mathcal{H}_x-\mathcal{F}_1-\mathcal{F}_4,
\\
&\delta_6=\mathcal{F}_6-\mathcal{F}_7.
\end{aligned}
\end{equation}
	\caption{The surface root basis for the semiclassical Laguerre weight recurrence.}
	\label{fig:d-roots-slw}
\end{figure}
\end{lemma}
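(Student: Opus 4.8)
The plan is to identify the effective anti-canonical curve explicitly, decompose it into irreducible components, and then read off the surface root basis together with its intersection pattern. Since $-K_{\mathcal{X}} = 2\mathcal{H}_x + 2\mathcal{H}_y - \mathcal{F}_1 - \cdots - \mathcal{F}_8$, I first look for the effective representative of this class, namely a curve of bi-degree $(2,2)$ on $\mathbb{P}^1\times\mathbb{P}^1$ passing through all eight base points of Lemma~\ref{lem:base-pt-semi-laguerre}. Using the base-point data, I would verify directly that $\Gamma=\{x^0x^1(y^1)^2=0\}$ contains every $q_i$: in the affine chart it reduces to $\{x=0\}$, which carries $q_1$ and $q_4$; the component $\{x^1=0\}$ (that is, $x=\infty$) carries $q_2$ and $q_3$; and the double component $\{(y^1)^2=0\}$ (that is, $y=\infty$) carries $q_4$ together with the first infinitely-close point $q_5$. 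This fixes the decomposition $\Gamma=\{x=0\}+\{x=\infty\}+2\{y=\infty\}$, whose classes sum to $2\mathcal{H}_x+2\mathcal{H}_y$ as required.

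Next I would compute the proper transforms under the eight blowups by tracking which base point lies on which component. The line $\{x=\infty\}$ passes through $q_2,q_3$ only, giving $d_1=\mathcal{H}_x-\mathcal{F}_2-\mathcal{F}_3$; the line $\{x=0\}$ passes through $q_1,q_4$ only, giving $d_5=\mathcal{H}_x-\mathcal{F}_1-\mathcal{F}_4$; and $\{y=\infty\}$ passes through $q_4$ and the first cascade point $q_5$, giving $d_2=\mathcal{H}_y-\mathcal{F}_4-\mathcal{F}_5$. The decisive check is that, writing $v_4=Y/x$ in the blowup chart at $q_4$, the proper transform of $\{x=0\}$ meets the exceptional divisor $F_4$ at $v_4=\infty$, whereas the cascade sits at $v_4=0$; hence $\{x=0\}$ does not continue into the cascade, while $\{y=\infty\}$, the locus $v_4=0$, does. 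The exceptional divisors of the successive cascade blowups then contribute the difference classes: the proper transform of $F_4$ is $d_4=\mathcal{F}_4-\mathcal{F}_5$, of $F_5$ is $d_3=\mathcal{F}_5-\mathcal{F}_6$, of $F_6$ is $d_6=\mathcal{F}_6-\mathcal{F}_7$, and of $F_7$ is $d_0=\mathcal{F}_7-\mathcal{F}_8$, with $F_8$ remaining a $(-1)$-curve that is not an anti-canonical component.

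Finally I would verify the two algebraic claims. Summing the classes with the Kac marks of $E_6^{(1)}$ yields $d_0+d_1+2d_2+3d_3+2d_4+d_5+2d_6=2\mathcal{H}_x+2\mathcal{H}_y-\mathcal{F}_1-\cdots-\mathcal{F}_8=-\mathcal{K}_{\mathcal{X}}$, which is a direct bookkeeping of coefficients. Then, using the intersection form~\eqref{eq:int-form}, I would compute $d_i\bullet d_i=-2$ for each $i$ and check that the only nonzero off-diagonal products are $d_1\bullet d_2=d_2\bullet d_3=d_3\bullet d_4=d_4\bullet d_5=d_3\bullet d_6=d_6\bullet d_0=1$, which is exactly the incidence encoded by the $E_6^{(1)}$ Dynkin diagram of Figure~\ref{fig:d-roots-slw}.

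I expect the main obstacle to be the cascade bookkeeping: correctly resolving the infinitely-close points $q_5\leftarrow q_6\leftarrow q_7\leftarrow q_8$ in the successive local charts and determining, at each stage, whether the proper transform of $\{y=\infty\}$ or of an exceptional divisor actually passes through the next center. This is what produces the multiplicity-$2$ and multiplicity-$3$ components and pins down the branch structure of the diagram; once the components $d_0,\ldots,d_6$ are correctly identified, the verification of $-\mathcal{K}_{\mathcal{X}}$ and of all intersection numbers is a routine application of~\eqref{eq:int-form}.
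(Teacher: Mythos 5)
Your proposal is correct and follows exactly the computation the paper intends (the paper states this lemma without an explicit proof, relying on the framework described in Section~\ref{sub:the_surface_type}): identify $\Gamma=\{x=0\}+\{x=\infty\}+2\{y=\infty\}$ as the $(2,2)$-curve (polar divisor of $\omega$) through the $q_i$, track proper transforms through the cascade $q_4\leftarrow q_5\leftarrow\cdots\leftarrow q_8$, and verify classes and intersection numbers via~\eqref{eq:int-form}. In particular, your key chart computation — that the proper transform of $\{x=0\}$ meets $F_4$ at $v_4=\infty$ while the cascade continues along $\{y=\infty\}$ at $v_4=0$ — is precisely what forces $d_5=\mathcal{H}_x-\mathcal{F}_1-\mathcal{F}_4$, $d_2=\mathcal{H}_y-\mathcal{F}_4-\mathcal{F}_5$, and the multiplicity pattern $(1,1,2,3,2,1,2)$ of the $E_6^{(1)}$ configuration.
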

The generalized Cartan matrix of affine type $E_{6}^{(1)}$ \cite{Kac:1990:IDLA} is
\begin{equation}
\delta_{i}\bullet\delta_{j}=\begin{pmatrix}
                            -2 & 0  & 0  & 0  & 0  & 0  & 1 \\
                             0 & -2 & 1  & 0  & 0  & 0  & 0 \\
                             0 & 1  & -2 & 1  & 0  & 0  & 0 \\
                             0 & 0  & 1  & -2 & 1  & 0  & 1 \\
                             0 & 0  & 0  & 1  & -2 & 1  & 0 \\
                             0 & 0  & 0  & 0  & 1  & -2 & 0 \\
                             1 & 0  & 0  & 1  & 0  & 0  & -2 \\
                            \end{pmatrix}.
\end{equation}

In Figure \ref{fig:surface-semi-laguerre}, we display the final stage of the blowup process and the resulting $E_{6}^{(1)}$ surface. Consequently, our recurrence relation is classified within the d-$\dPain{A_{2}^{(1)}/E_{6}^{(1)}}$ family, characterized by the symmetry group $\widetilde{W}\left(A_{2}^{(1)}\right)$. The details of the standard d-$\dPain{A_{2}^{(1)}/E_{6}^{(1)}}$ point configuration, root bases for the surface, symmetry sub-lattices, and other relevant data are documented in Appendix, following the conventions established in \cite{KajNouYam:2017:GAOPE}.
\begin{figure}[ht]
	\begin{tikzpicture}[>=stealth,basept/.style={circle, draw=red!100, fill=red!100, thick, inner sep=0pt,minimum size=1.2mm}]
		\begin{scope}[xshift = 0cm]
			\draw [black, line width = 1pt]  	(4,0) 	-- (-0.5,0) 	node [left] {$H_{y}$} node[pos=0, right] {$y=0$};
			\draw [black, line width = 1pt] 	(4,2.5) -- (-0.5,2.5)	node [left] {$H_{y}$} node[pos=0, right] {$y=\infty$};
			\draw [black, line width = 1pt] 	(0,3) -- (0,-0.5)		node [below] {$H_{x}$} node[pos=0, above, xshift=-7pt] {$x=0$};
			\draw [black, line width = 1pt] 	(3,3) -- (3,-0.5)		node [below] {$H_{x}$} node[pos=0, above, xshift=7pt] {$x=\infty$};
			
			\node (q1) at (0,1.3) 	[basept,label={[xshift=-10pt, yshift = -10 pt] $q_{1}$}] {};
			\node (q2) at (3,1.1) 	[basept,label={[xshift=10pt, yshift = -10 pt] $q_{2}$}] {};
			\node (q3) at (3,0) [basept,label={[xshift=10pt, yshift =0 pt] $q_{3}$}] {};
			\node (q4) at (0,2.5) [basept,label={[xshift=-10pt, yshift =-15 pt] $q_{4}$}] {};
			\node (q5) at (0.5,3.0) 	[basept,label={[above ] $q_{5}$}] {};
			\node (q6) at (1.1,3.0) 	[basept,label={[above ] $q_{6}$}] {};
			\node (q7) at (1.7,3.0)		[basept,label={[above ] $q_{7}$}] {};
			\node (q8) at (2.3,3.0) [basept,label={[above ] $q_{8}$}] {};
			\draw [line width = 0.8pt, ->] (q8) edge (q7) (q7) edge (q6) (q6) edge (q5) (q5) edge (q4);
		\end{scope}

		\draw [->] (7,1.5)--(5,1.5) node[pos=0.5, below] {$\operatorname{Bl}_{q_{1}\cdots q_{8}}$};

		\begin{scope}[xshift = 9cm]
			\draw [blue, line width = 1pt] 	(1.5,2.5) -- (4,2.5)	node [right] {$H_{y} - F_{4}-F_{5}$};
			\draw [blue, line width = 1pt] 	(0,2.2) -- (0,-0.5)		node [below] {$H_{x}-F_{1}-F_{4}$};
			\draw [red, line width = 1pt ] (-0.5,0) .. controls (2,0) and (2.5,-0.5) .. (3,-1) node [below] {$H_{y} - F_{3}$};
			\draw [blue, line width = 1pt] 	(3.5,3)  .. controls (3.5,1.5) and (3.7,1) .. (4,0.5)  node [right] {$H_{x} - F_{2}-F_{3}$};

			\draw [red, line width = 1 pt] (-0.4,0.9) -- (0.4,1.7) node [pos = 0, left] {$F_{1}$};
			\draw [red, line width = 1 pt] (3.1,0.7) -- (3.9,1.5) node [right]
 {$F_{2}$};
			\draw [blue, line width = 1 pt] (-0.4,1.5) -- (1.6,3.5)  node [pos = 0, left] {$F_{4}-F_5$};
             \draw [blue, line width = 1 pt] (1.2,2.6) -- (2.6,4)  node [above right]
{$F_{6}-F_7$};
            \draw [blue, line width = 1 pt] (2.4,2.0) -- (0.6,3.6)  node [above left]
{$F_{5}-F_6$};
            \draw [blue, line width = 1 pt] (3.2,2.8) -- (2.0,3.8)  node [above left]
{$F_{7}-F_8$};
			\draw [red, line width = 1 pt] (2,-1) -- (4,1) node [right] {$F_{3}$};
			\draw [red, line width = 1 pt] (2.5,2.8) -- (3.3,3.6) node [right]
 {$F_{8}$};
		\end{scope}
	\end{tikzpicture}
	\caption{The Sakai surface for the semiclassical Laguerre weight recurrence.}
	\label{fig:surface-semi-laguerre}
\end{figure}
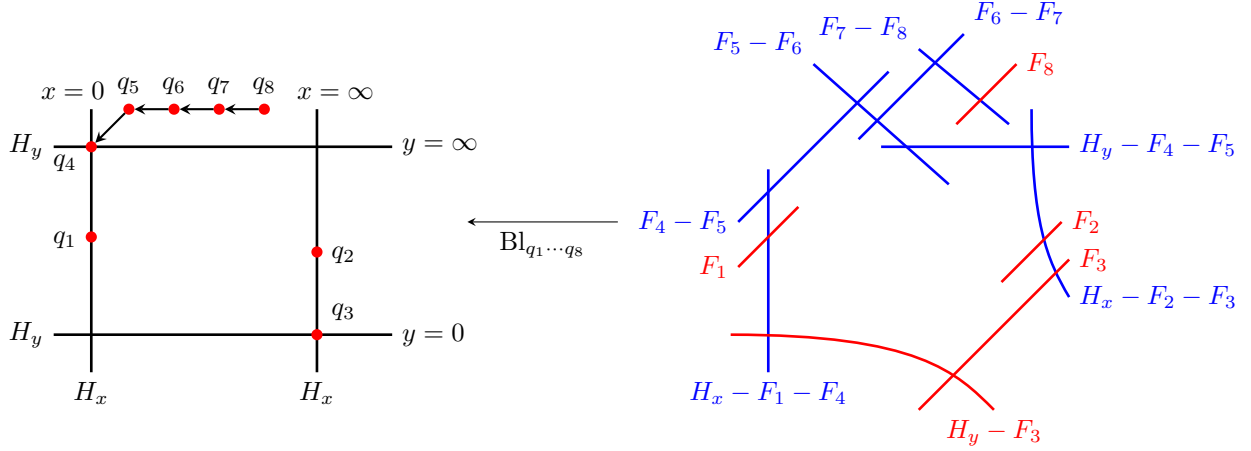


\subsection{Initial Geometry Identification} 
\label{sub:initial_geometry_identification}
The next step in the identification procedure is to determine a basis transformation within $\operatorname{Pic}(\mathcal{X})$ from the basis $\{\mathcal{H}_{x},\mathcal{H}_{y},\mathcal{F}_{i}\}$ to the basis $\{\mathcal{H}_{q},\mathcal{H}_{p},\mathcal{E}_{j}\}$ that corresponds to the standard example. Here, we conduct an initial geometry matching, acknowledging that the resulting basis transformation is non-unique and may later require adjustment to match the dynamics.
\begin{lemma}\label{lem:change-basis-pre}
	The following change of basis of $\operatorname{Pic}(\mathcal{X})$ identifies the root bases
	between the standard $E_{6}^{(1)}$ surface and the surface that we obtained for the semiclassical Laguerre weight recurrence:
	\begin{align*}
\begin{aligned}
&\mathcal{H}_x=\mathcal{H}_p,
\\
&\mathcal{H}_y=\mathcal{H}_q+\mathcal{H}_p-\mathcal{E}_1-\mathcal{E}_3,
\\
&\mathcal{F}_1=\mathcal{E}_4,
\\
&\mathcal{F}_2=\mathcal{E}_2,
\\
&\mathcal{F}_3=\mathcal{H}_p-\mathcal{E}_1,
\\
&\mathcal{F}_4=\mathcal{H}_p-\mathcal{E}_3,
\\
&\mathcal{F}_5=\mathcal{E}_5,
\\
&\mathcal{F}_6=\mathcal{E}_6,
\\
&\mathcal{F}_7=\mathcal{E}_7,
\\
&\mathcal{F}_8=\mathcal{E}_8,
\end{aligned}
&&
\begin{aligned}
&\mathcal{H}_q=\mathcal{H}_x+\mathcal{H}_y-\mathcal{F}_3-\mathcal{F}_4,
\\
&\mathcal{H}_p=\mathcal{H}_x,
\\
&\mathcal{E}_1=\mathcal{H}_x-\mathcal{F}_3,
\\
&\mathcal{E}_2=\mathcal{F}_2,
\\
&\mathcal{E}_3=\mathcal{H}_x-\mathcal{F}_4,
\\
&\mathcal{E}_4=\mathcal{F}_1,
\\
&\mathcal{E}_5=\mathcal{F}_5,
\\
&\mathcal{E}_6=\mathcal{F}_6,
\\
&\mathcal{E}_7=\mathcal{F}_7,
\\
&\mathcal{E}_8=\mathcal{F}_8.
\end{aligned}
\end{align*}
\end{lemma}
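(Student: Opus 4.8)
The plan is to treat Lemma~\ref{lem:change-basis-pre} as the verification that a single explicitly-given $\mathbb{Z}$-linear map is a lattice isometry identifying the two surface root bases, rather than as a construction problem. The map itself is read off geometrically by matching the two blowup realizations of the $E_{6}^{(1)}$ surface: the isolated base points and degeneration cascades of the Laguerre configuration in Lemma~\ref{lem:base-pt-semi-laguerre} against those of the standard configuration in Figure~\ref{fig:surface-e6}, together with the decomposition of $-K_{\mathcal{X}}$ into its $E_{6}^{(1)}$ components. Once the candidate formulas are in hand, the content of the lemma reduces to three finite checks.

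First I would confirm that the map $\phi\colon \mathcal{H}_x\mapsto\mathcal{H}_p,\ \mathcal{H}_y\mapsto\mathcal{H}_q+\mathcal{H}_p-\mathcal{E}_1-\mathcal{E}_3,\ \mathcal{F}_i\mapsto(\text{stated images})$ is an isometry of the two intersection forms. Since the forms are bilinear and determined by their values on generators (Equation~\eqref{eq:int-form} on the source, and the analogous products on the target), it suffices to compute $\phi(a)\bullet\phi(b)$ in the $\{\mathcal{H}_q,\mathcal{H}_p,\mathcal{E}_j\}$ form for each pair $a,b$ of source generators and to check that the result reproduces $a\bullet b$. For instance, $\phi(\mathcal{H}_y)\bullet\phi(\mathcal{H}_y)=(\mathcal{H}_q+\mathcal{H}_p-\mathcal{E}_1-\mathcal{E}_3)\bullet(\mathcal{H}_q+\mathcal{H}_p-\mathcal{E}_1-\mathcal{E}_3)=2(\mathcal{H}_q\bullet\mathcal{H}_p)+(\mathcal{E}_1\bullet\mathcal{E}_1)+(\mathcal{E}_3\bullet\mathcal{E}_3)=2-1-1=0=\mathcal{H}_y\bullet\mathcal{H}_y$, and $\phi(\mathcal{H}_x)\bullet\phi(\mathcal{H}_y)=\mathcal{H}_p\bullet(\mathcal{H}_q+\mathcal{H}_p-\mathcal{E}_1-\mathcal{E}_3)=1=\mathcal{H}_x\bullet\mathcal{H}_y$; the remaining pairings are of the same routine type.

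Second, I would verify invertibility over $\mathbb{Z}$ by checking that the right-hand column of formulas defines a map $\psi$ with $\psi\circ\phi=\mathrm{id}$, e.g. $\psi(\phi(\mathcal{H}_x))=\psi(\mathcal{H}_p)=\mathcal{H}_x$ and $\psi(\phi(\mathcal{F}_3))=\psi(\mathcal{H}_p-\mathcal{E}_1)=\mathcal{H}_x-(\mathcal{H}_x-\mathcal{F}_3)=\mathcal{F}_3$; since both columns are integral, this makes $\phi$ a lattice automorphism. Third --- and this is the actual point of the lemma --- I would substitute the formulas into the Laguerre surface root basis of Figure~\ref{fig:d-roots-slw} and check term by term that each $\delta_i$ lands on the standard $E_{6}^{(1)}$ root of Figure~\ref{fig:d-roots-e61}: $\delta_1=\mathcal{H}_x-\mathcal{F}_2-\mathcal{F}_3\mapsto\mathcal{H}_p-\mathcal{E}_2-(\mathcal{H}_p-\mathcal{E}_1)=\mathcal{E}_1-\mathcal{E}_2$, $\delta_2=\mathcal{H}_y-\mathcal{F}_4-\mathcal{F}_5\mapsto\mathcal{H}_q-\mathcal{E}_1-\mathcal{E}_5$, $\delta_4=\mathcal{F}_4-\mathcal{F}_5\mapsto\mathcal{H}_p-\mathcal{E}_3-\mathcal{E}_5$, $\delta_5=\mathcal{H}_x-\mathcal{F}_1-\mathcal{F}_4\mapsto\mathcal{E}_3-\mathcal{E}_4$, while $\delta_0,\delta_3,\delta_6$ match immediately since they involve only the identifications $\mathcal{F}_i=\mathcal{E}_i$ for $i=5,6,7,8$. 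Because the $E_{6}^{(1)}$ anti-canonical class is the same fixed combination $\delta_0+\delta_1+2\delta_2+3\delta_3+2\delta_4+\delta_5+2\delta_6$ of these roots on both sides, matching the $\delta_i$ node by node automatically sends $-K$ to $-K$, completing the identification.

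I do not expect a genuine mathematical obstacle here, since the statement is verificational; the care lies elsewhere. The point to flag is that this identification captures only the \emph{static} root-basis data and is, as the preceding text notes, deliberately non-unique: the remaining freedom in the isometry may still have to be fixed so that $\phi$ intertwines the induced mapping $\psi_{*}$ of Lemma~\ref{lem:dyn} with the translation realizing the standard equation~\eqref{eq:dPE6-KNY}. Accordingly, in writing up Lemma~\ref{lem:change-basis-pre} the only thing to be careful about is to present the isometry and the node-by-node root matching cleanly, while signposting that dynamical compatibility is settled in the next step of the identification procedure.
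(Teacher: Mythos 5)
Your proposal is correct and follows essentially the same route as the paper, whose proof is exactly the node-by-node comparison of the surface root bases in Figures~\ref{fig:d-roots-slw} and~\ref{fig:d-roots-e61} under the stated substitution (your third check, which your computations carry out accurately). The isometry and $\mathbb{Z}$-invertibility verifications you add are left implicit in the paper as "straightforward," and your closing caveat about dynamical compatibility being deferred matches the paper's subsequent adjustment of this basis change by $w_1$ in Lemma~\ref{lem:change-basis-fin}.
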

\begin{proof}
	This computation is straightforward and relies on comparing the surface root bases illustrated in Figure~\ref{fig:d-roots-slw} and  Figure~\ref{fig:d-roots-e61}.
\end{proof}


\subsection{The Symmetry Roots and the Translations} 
\label{sub:the_symmetry_roots_and_the_translations}
We are now in the position to start comparing the dynamics. By starting with the standard selection of the symmetry root basis \eqref{eq:a-roots-a2} and utilizing the basis transformation in Lemma~\ref{lem:change-basis-pre}, we derive the symmetry roots for the applied problem, as illustrated in Figure \ref{fig:a-roots-slw-pre}.
\begin{figure}[ht]
\begin{equation}\label{eq:a-roots-slw-pre}			
	\raisebox{-32.1pt}{\begin{tikzpicture}[
			elt/.style={circle,draw=black!100,thick, inner sep=0pt,minimum size=2mm}]
		\path 	(0,1) 	node 	(a0) [elt, label={[xshift=-10pt, yshift = -10 pt] $\alpha_{0}$} ] {}
		        (-1.15,-1) node 	(a1) [elt, label={[xshift=-10pt, yshift = -10 pt] $\alpha_{1}$} ] {}
		        (1.15,-1) node  	(a2) [elt, label={[xshift=10pt, yshift = -10 pt] $\alpha_{2}$} ] {};
		\draw [black,line width=1pt ] (a0) -- (a1) -- (a2)  -- (a0);
	\end{tikzpicture}} \qquad
			\begin{aligned}
&\alpha_0=2\mathcal{H}_x+\mathcal{H}_y-\mathcal{F}_3-\mathcal{F}_4-\mathcal{F}_5-\mathcal{F}_6-\mathcal{F}_7-\mathcal{F}_8,
\\
&\alpha_1=\mathcal{H}_y-\mathcal{F}_1-\mathcal{F}_3,\quad
\\
&\alpha_2=\mathcal{F}_3-\mathcal{F}_2,
\\
&\delta=\alpha_0+\alpha_1+\alpha_2.
\end{aligned}
\end{equation}
	\caption{The symmetry root basis for the semiclassical Laguerre weight recurrence (preliminary choice).}
	\label{fig:a-roots-slw-pre}	
\end{figure}

From the action on $\operatorname{Pic}(\mathcal{X})$ specified in Lemma~\ref{lem:dyn}, we immediately deduce that the corresponding translation on the root lattice. We decompose $\psi$ in terms of the generators of the extended affine Weyl symmetry group (see Section~\ref{sub:the_extended_affine_weyl_symmetry_group}) and compare the results with the standard mapping $\varphi$ given in Section~\ref{sub:Standard_discrete_d_dpain_a__2_1_e__6_1_equation}. We obtain
\begin{alignat*}{2}
	\psi_{*}: \upalpha &=  \langle \alpha_{0}, \alpha_{1}, \alpha_{2}  \rangle
	\mapsto \psi_{*}(\upalpha) = \upalpha + \langle 1,-1,0 \rangle \delta,&\qquad
	\psi &= \sigma_{1}\sigma_{2}w_{2}w_{1},\\
\varphi_{*}: \upalpha &=  \langle \alpha_{0}, \alpha_{1}, \alpha_{2}  \rangle
	\mapsto \varphi_{*}(\upalpha) = \upalpha + \langle 0,1,-1 \rangle \delta,&\qquad
	\varphi &= \sigma_{1}\sigma_{2}w_{0}w_{2}.
\end{alignat*}

We immediately observe that $\psi = w_{1}\circ \varphi \circ w_{1}^{-1}$ (note that $w_{1} \sigma_{1}\sigma_{2}  = \sigma_{1}\sigma_{2} w_{0}$
and that $w_{1}$ is an involution, $w_{1}^{-1} = w_{1}$). Consequently, our dynamics are indeed equivalent to the standard equation~\eqref{eq:dPE6-KNY}, but the change of basis in Lemma~\ref{lem:change-basis-pre} needs to be adjusted by acting by $w_1$. We do it in the next section.


\subsection{Final Geometry Identification} 
\label{sub:final_geometry_identification}
\begin{lemma}\label{lem:change-basis-fin}
	After the change of basis of $\operatorname{Pic}(\mathcal{X})$ given by
	\begin{align*}
\begin{aligned}
&\mathcal{H}_x=\mathcal{H}_q+\mathcal{H}_p-\mathcal{E}_3-\mathcal{E}_4,
\\
&\mathcal{H}_y=\mathcal{H}_q+\mathcal{H}_p-\mathcal{E}_1-\mathcal{E}_3,
\\
&\mathcal{F}_1=\mathcal{H}_q-\mathcal{E}_3,
\\
&\mathcal{F}_2=\mathcal{E}_2,
\\
&\mathcal{F}_3=\mathcal{H}_q+\mathcal{H}_p-\mathcal{E}_1-\mathcal{E}_3-\mathcal{E}_4,
\\
&\mathcal{F}_4=\mathcal{H}_p-\mathcal{E}_3,
\\
&\mathcal{F}_5=\mathcal{E}_5,
\\
&\mathcal{F}_6=\mathcal{E}_6,
\\
&\mathcal{F}_7=\mathcal{E}_7,
\\
&\mathcal{F}_8=\mathcal{E}_8,
\end{aligned}
&&
\begin{aligned}
&\mathcal{H}_q=\mathcal{H}_x+\mathcal{H}_y-\mathcal{F}_3-\mathcal{F}_4,
\\
&\mathcal{H}_p=\mathcal{H}_x+\mathcal{H}_y-\mathcal{F}_1-\mathcal{F}_3,
\\
&\mathcal{E}_1=\mathcal{H}_x-\mathcal{F}_3,
\\
&\mathcal{E}_2=\mathcal{F}_2,
\\
&\mathcal{E}_3=\mathcal{H}_x+\mathcal{H}_y-\mathcal{F}_1-\mathcal{F}_3-\mathcal{F}_4,
\\
&\mathcal{E}_4=\mathcal{H}_y-\mathcal{F}_3,
\\
&\mathcal{E}_5=\mathcal{F}_5,
\\
&\mathcal{E}_6=\mathcal{F}_6,
\\
&\mathcal{E}_7=\mathcal{F}_7,
\\
&\mathcal{E}_8=\mathcal{F}_8.
\end{aligned}
\end{align*}
		the recurrence relations \eqref{eq:xyn-evol}
		for variables $x_{n}$ and $y_{n}$ coincides with the discrete Painlev\'e equation given by
		\eqref{eq:dPE6-KNY}. The resulting identification of the symmetry root bases (the surface root bases do not change) is shown in
		Figure~\ref{fig:a-roots-slw-fin}.
\end{lemma}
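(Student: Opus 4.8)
The plan is to realize the final identification as the $w_1$-conjugate of the preliminary one and then to extract the explicit birational change of variables from the images of $\mathcal{H}_x$ and $\mathcal{H}_y$. First I would check that the displayed basis change is exactly the composition of Lemma~\ref{lem:change-basis-pre} with the reflection $w_1$, whose action on $\operatorname{Pic}(\mathcal{X})$ is recorded in the proof of Lemma~\ref{thm:bir-weyl-aut-a2}: applying $w_1$ to the preliminary formulas $\mathcal{H}_x = \mathcal{H}_p$, $\mathcal{H}_y = \mathcal{H}_q + \mathcal{H}_p - \mathcal{E}_1 - \mathcal{E}_3$, $\mathcal{F}_3 = \mathcal{H}_p - \mathcal{E}_1$, and so on, reproduces the displayed entries (e.g.\ $w_1(\mathcal{H}_p) = \mathcal{H}_q + \mathcal{H}_p - \mathcal{E}_3 - \mathcal{E}_4$ yields the new $\mathcal{H}_x$). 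Since Section~\ref{sub:the_symmetry_roots_and_the_translations} already established $\psi = w_1 \circ \varphi \circ w_1^{-1}$ with $w_1$ an involution, this conjugated dictionary is precisely the one intertwining the induced map $\psi_*$ of Lemma~\ref{lem:dyn} with the standard translation $\varphi_*$. A routine verification that the substitution preserves the intersection form \eqref{eq:int-form} and carries the surface root basis of Figure~\ref{fig:d-roots-slw} onto that of Figure~\ref{fig:d-roots-e61} finishes the lattice-level matching.

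Next I would recover the coordinate change from the two pencils. The relation $\mathcal{H}_x = \mathcal{H}_q + \mathcal{H}_p - \mathcal{E}_3 - \mathcal{E}_4$ means that the fibres $\{x = \mathrm{const}\}$ correspond in the $(q,p)$-plane to the pencil of $(1,1)$-curves through $p_3$ and $p_4$; as in the $w_1$ computation these have the form $A(qp - a_1) + Bq = 0$, so $x$ is a M\"obius function of $q/(qp - a_1)$. Likewise $\mathcal{H}_y = \mathcal{H}_q + \mathcal{H}_p - \mathcal{E}_1 - \mathcal{E}_3$ forces $\{y = \mathrm{const}\}$ to be the $(1,1)$-curves through $p_1$ and $p_3$, which are exactly the level sets $qp = \mathrm{const}$, so $y$ is a M\"obius function of $qp$. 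The remaining M\"obius, scaling and sign constants are then pinned down by requiring the base points $q_1, \ldots, q_8$ of Lemma~\ref{lem:base-pt-semi-laguerre} to be carried to the standard configuration $p_1, \ldots, p_8$, and by normalizing $\mathrm{d}x \wedge \mathrm{d}y$ against $\mathrm{d}q \wedge \mathrm{d}p$. For instance, matching $q_2$ (at $y = -\lambda$) with $p_2$ (at $qp = -a_2$) forces the affine normalization $y = qp - a_1$ together with $a_1 + a_2 = \lambda$, while the symplectic normalization fixes the factor $\sqrt{2}$ and the rescaling $s = \sqrt{2}\,t$. This produces the change of variables \eqref{eq:xy2qp}--\eqref{eq:qp2xy} and the assignment $a_0 = 1 - \lambda$, $a_1 = -n$, $a_2 = n + \lambda$, consistent with $a_0 + a_1 + a_2 = 1$.

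Finally I would confirm the claim by direct substitution: inserting $q = -\sqrt{2}\,x y$, $p = (n - y)/(\sqrt{2}\,x y)$ and $t = s/\sqrt{2}$ into \eqref{eq:dPE6-KNY}, with $\overline{x} = x_{n+1}$ and $\overline{y} = y_{n+1}$, should reproduce the two relations of \eqref{eq:xyn-evol}; and the parameter evolution $a_1 \mapsto a_1 - 1$, $a_2 \mapsto a_2 + 1$ of \eqref{eq:dPE6-rv-evol} is consistent with the shift $n \mapsto n + 1$ from Lemma~\ref{lem:dyn} under $a_1 = -n$, $a_2 = n + \lambda$. The conceptual content is already secured by the conjugacy $\psi = w_1 \circ \varphi \circ w_1^{-1}$, so I expect the main obstacle to be purely computational: correctly fixing every M\"obius, scaling and sign constant in the second step, and then carrying out the algebraic substitution without a normalization slip, since any such error would make the two equivalent recurrences appear to disagree.
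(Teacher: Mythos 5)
Your proposal is correct and takes essentially the same route as the paper: the final dictionary is obtained by composing Lemma~\ref{lem:change-basis-pre} with the reflection $w_1$, justified by the conjugacy $\psi = w_{1}\circ \varphi \circ w_{1}^{-1}$, and the explicit change of variables is then read off from the two pencils of $(1,1)$-curves determined by $\mathcal{H}_x$ and $\mathcal{H}_y$, with the remaining constants fixed by matching base points and the recurrence itself. The only cosmetic difference is that you pin down the root variables $a_{0}=1-\lambda$, $a_{1}=-n$, $a_{2}=n+\lambda$ via base-point matching and the normalization $a_{0}+a_{1}+a_{2}=1$, whereas the paper computes them intrinsically through the period map; the two are equivalent within this framework.
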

\begin{figure}[ht]
\begin{equation*}\label{eq:a-roots-slw-fin}			
	\raisebox{-32.1pt}{\begin{tikzpicture}[
			elt/.style={circle,draw=black!100,thick, inner sep=0pt,minimum size=2mm}]
		\path 	(0,1) 	node 	(a0) [elt, label={[xshift=-10pt, yshift = -10 pt] $\alpha_{0}$} ] {}
		        (-1.15,-1) node 	(a1) [elt, label={[xshift=-10pt, yshift = -10 pt] $\alpha_{1}$} ] {}
		        (1.15,-1) node  	(a2) [elt, label={[xshift=10pt, yshift = -10 pt] $\alpha_{2}$} ] {};
		\draw [black,line width=1pt ] (a0) -- (a1) -- (a2)  -- (a0);
	\end{tikzpicture}} \qquad
			\begin{aligned}
&\alpha_0=2\mathcal{H}_x+2\mathcal{H}_y-\mathcal{F}_1-2\mathcal{F}_3-\mathcal{F}_4-\mathcal{F}_5-\mathcal{F}_6-\mathcal{F}_7-\mathcal{F}_8,
\\
&\alpha_1=-\mathcal{H}_y+\mathcal{F}_1+\mathcal{F}_3,\quad
\\
&\alpha_2=\mathcal{H}_y-\mathcal{F}_1-\mathcal{F}_2,
\\
&\delta=\alpha_0+\alpha_1+\alpha_2.
\end{aligned}
\end{equation*}
	\caption{The symmetry root basis for the semiclassical Laguerre weight recurrence (final choice).}
	\label{fig:a-roots-slw-fin}	
\end{figure}
Next we need to realize this change of basis on $\operatorname{Pic}(\mathcal{X})$ by an explicit change of coordinates. For that, it is convenient to first
match the parameters between the applied problem and the standard example. This is done with the help of the \emph{Period Map}.


\subsection{The Period Map and the Identification of Parameters} 
\label{sub:the_period_map_and_the_identification_of_parameters}
Before establishing the coordinate transformation that identifies the two dynamics, we need to match the semiclassical Laguerre weight parameters $\lambda$, $s$ and the recurrence step $n$ with the root variables $a_i$. These root variables are defined through the \emph{Period Map} $\mathcal{X}:Q\rightarrow\mathbb{C}$, where $a_{i}=\mathcal{X}(\alpha_{i})$. Additionally, it is readily observed that the points $q_i$ lie on the polar divisor of a symplectic form, which in the affine $(x,y)$-chart is given by $\omega = k \frac{dx\wedge dy}{x}$. Consequently, the computation of the period map relies on the following results from \cite{Sak:2001:RSAWARSGPE}:
\begin{enumerate}[$\bullet$]
		\item Each symmetry root $\alpha_{i}$ can be represented (non-uniquely) as the difference of classes of two effective divisors, $\alpha_{i}=[C_{i}^1]-[C_{i}^0]$.
		
		\item For each such representation, there exists a unique irreducible component $d_k$ of the anti-canonical divisor $-K_{\mathcal{X}}$ satisfying $d_k\bullet C_{i}^1=d_k\bullet C_{i}^0=1$. Define the intersection points $P_i:=d_k\cap C_i^0$ and $Q_i:=d_k\cap C_i^1$.

        \item Consequently, the period map $\mathcal{X}$ acts on $\alpha_i$ as
        \begin{equation}
        \mathcal{X}(\alpha_i)=\mathcal{X}([C_{i}^1]-[C_{i}^0])=\int_{P_i}^{Q_i}\frac{1}{2\pi \mathrm{i}}\oint_{d_k}\omega=\int_{P_i}^{Q_i}\mathrm{res}_{d_k}\omega.
        \end{equation}
		\end{enumerate}
Building on this, we establish the following lemma.
\begin{lemma}
	\qquad
	
	\begin{enumerate}[(i)]
		\item The residues of the symplectic form $\omega = k \frac{\mathrm{d}x\wedge \mathrm{d}y}{x}$
		along the irreducible components of the polar divisor are given by
		\begin{alignat*}{4}
			\operatorname{res}_{d_{0}} \omega &=  -\frac{k}{4}\mathrm{d}v_7, &\qquad
			\operatorname{res}_{d_{1}} \omega &=  -k\mathrm{d}y,\quad &\qquad
			\operatorname{res}_{d_{2}} \omega &=  0,\quad &\qquad
			\operatorname{res}_{d_{3}} \omega &=  0,\\
			\operatorname{res}_{d_{4}} \omega &=  0, &\qquad
            \operatorname{res}_{d_{5}} \omega &=  k\mathrm{d}y, &\qquad
			\operatorname{res}_{d_{6}} \omega &=  k\frac{v_6}{4}\mathrm{d}v_6.
		\end{alignat*}
		
		\item For the standard root variable normalization $\mathcal{X}(\delta)=a_{0}+a_{1}+a_{2}=1$ we need to take $k=-1$ and root variables $a_{i}$ are then given by

		\begin{equation}\label{eq:root-vars-slw}
			a_{0} =  1-\lambda, \qquad a_{1} = -n,\qquad
			a_{2} =  n + \lambda.
		\end{equation}
		\end{enumerate}
\end{lemma}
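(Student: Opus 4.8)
The plan is to handle the two parts separately: part (i) is a purely local computation of logarithmic residues in the blow-up charts of Lemma~\ref{lem:base-pt-semi-laguerre}, and part (ii) is a sequence of period integrals of the one-forms produced in part (i), together with fixing the constant $k$ from the normalization $\mathcal{X}(\delta)=1$.

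For part (i) I would march through the cascade chart by chart. Along $d_5$ (the proper transform of $x=0$) and $d_1$ (the proper transform of $x=\infty$) the form $\omega=k\,\mathrm{d}x\wedge\mathrm{d}y/x$ already has a simple pole, and after the substitution $X=1/x$ in the second case one reads off $\operatorname{res}_{d_5}\omega=k\,\mathrm{d}y$ and $\operatorname{res}_{d_1}\omega=-k\,\mathrm{d}y$ at once. For $d_2,d_3,d_4$ one substitutes the cascade coordinate changes $Y=x v_4$, $v_4=x v_5$ and finds that $\omega$ acquires a pole of order $\ge 2$ transverse to each of these components (e.g. $\omega=-k\,\mathrm{d}x\wedge\mathrm{d}v_4/(x^2 v_4^2)$ in the first chart); there is then no $\mathrm{d}x/x$ term, so the residues vanish. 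The only delicate residues are on $d_6$ and $d_0$, where $\omega$ still carries a double pole and the residue is the coefficient of $\mathrm{d}x/x$ in the Laurent expansion of the regular factor: in the $(x,v_6)$-chart $\omega=-k\,\mathrm{d}x\wedge\mathrm{d}v_6/\!\left(x^2(2+xv_6)^2\right)$, and expanding $(2+xv_6)^{-2}=\tfrac14-\tfrac14 x v_6+\cdots$ extracts $\operatorname{res}_{d_6}\omega=k\,\tfrac{v_6}{4}\,\mathrm{d}v_6$; the $(x,v_7)$-chart likewise reduces to a simple pole and gives $\operatorname{res}_{d_0}\omega=-\tfrac{k}{4}\,\mathrm{d}v_7$.

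For part (ii) I would apply the three-step recipe recalled just before the lemma to the symmetry roots of Figure~\ref{fig:a-roots-slw-fin}. The two finite roots are short. Writing $\alpha_2=[\mathcal{H}_y-\mathcal{F}_1]-[\mathcal{F}_2]$ forces the mediating anti-canonical component to be $d_1$ (residue $-k\,\mathrm{d}y$); the two intersection points are $y=-\lambda$, where $\mathcal{F}_2$ meets $d_1$, i.e. the location of $q_2$, and $y=n$, so $a_2=\int_{-\lambda}^{n}(-k)\,\mathrm{d}y=-k(n+\lambda)$. Likewise $\alpha_1=[\mathcal{F}_1]-[\mathcal{H}_y-\mathcal{F}_3]$ is mediated by $d_5$ (residue $k\,\mathrm{d}y$) with endpoints $y=0$ and $y=n$, giving $a_1=kn$. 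The remaining value I would get from the homomorphism property $a_0=\mathcal{X}(\delta)-a_1-a_2$ after computing $\mathcal{X}(\delta)$ by a single affine-root integral.

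This last integral is the crux and the step I expect to cost the most work, precisely because a naive decomposition of $\delta$ (or of the final $\alpha_0$) is reducible along $d_0$ and produces a non-transverse intersection. The clean route is to represent the affine root in the \emph{preliminary} basis of Figure~\ref{fig:a-roots-slw-pre} by the irreducible auxiliary $(-1)$-curve $C:\{2x^2y-sx-1=0\}$, of class $2\mathcal{H}_x+\mathcal{H}_y-\mathcal{F}_3-\mathcal{F}_4-\mathcal{F}_5-\mathcal{F}_6-\mathcal{F}_7$, i.e. the unique $(2,1)$-curve through $q_3$ and the cascade $q_4\leftarrow\cdots\leftarrow q_7$. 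Expanding its branch $y=(1+sx)/(2x^2)$ in the cascade coordinates $v_4,\dots,v_7$ shows that $C$ crosses $F_7$ at $v_7=2s^2$, whereas $q_8$ sits at $v_7=2(s^2+2(n+\lambda-1))$; integrating $\operatorname{res}_{d_0}\omega=-\tfrac{k}{4}\,\mathrm{d}v_7$ between these two points yields $k(n+\lambda-1)$, and the two finite preliminary periods (both via $d_1$) contribute $-kn$ and $-k\lambda$, so that $\mathcal{X}(\delta)=-k$. The normalization $\mathcal{X}(\delta)=a_0+a_1+a_2=1$ then forces $k=-1$, and substituting into $a_1=kn$, $a_2=-k(n+\lambda)$, $a_0=\mathcal{X}(\delta)-a_1-a_2=k(\lambda-1)$ produces $a_0=1-\lambda$, $a_1=-n$, $a_2=n+\lambda$, as claimed. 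The genuine obstacle is thus geometric rather than analytic: identifying the correct irreducible representative of the affine root and locating where it meets the cascade exceptional divisor, since it is exactly the cancellation of the $s$- and $n$-dependent terms — the difference $v_7(q_8)-v_7(C\cap F_7)=4(n+\lambda-1)$ — that leaves the clean value.
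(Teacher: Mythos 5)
Your proposal is correct, and it agrees with the paper everywhere except at the crux, where you take a genuinely different route. Part (i) matches the chart-by-chart residue computations the paper leaves implicit (your expressions in the $(x,v_4)$-, $(x,v_6)$- and $(x,v_7)$-charts are exactly right), and your periods for the two finite roots of Figure~\ref{fig:a-roots-slw-fin} --- via $d_5$ with residue $k\,\mathrm{d}y$ between $y=0$ and $y=n$, and via $d_1$ with residue $-k\,\mathrm{d}y$ between $y=-\lambda$ and $y=n$ --- reproduce the paper's values $a_1=kn$ and $a_2=-k(n+\lambda)$. The divergence is in the remaining period: the paper computes $a_0=\mathcal{X}(\alpha_0)$ \emph{directly} in the final basis, writing $\alpha_0=[2\mathcal{H}_x+2\mathcal{H}_y-\mathcal{F}_1-\mathcal{F}_3-\mathcal{F}_4-\cdots-\mathcal{F}_8]-[\mathcal{F}_3]$, where the first class is the pencil of $(2,2)$-curves through $q_1,q_3,q_4,\ldots,q_8$; both classes meet $d_1$ transversally (the curve at $(X=0,\,y=1-\lambda)$, the divisor $F_3$ at $(X=0,\,y=0)$), so $a_0=\int_0^{1-\lambda}(-k)\,\mathrm{d}y=k(\lambda-1)$, and the normalization $a_0+a_1+a_2=-k=1$ then fixes $k=-1$. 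You instead compute $\mathcal{X}(\delta)$ through the \emph{preliminary} basis of Figure~\ref{fig:a-roots-slw-pre}, representing its $\alpha_0$ by the explicit $(2,1)$-curve $2x^2y-sx-1=0$ of class $2\mathcal{H}_x+\mathcal{H}_y-\mathcal{F}_3-\mathcal{F}_4-\mathcal{F}_5-\mathcal{F}_6-\mathcal{F}_7$, mediated by $d_0$ with residue $-\tfrac{k}{4}\mathrm{d}v_7$, and recover $a_0$ from additivity of the period map; your endpoint values $v_7=2s^2$ and $v_7=2(s^2+2(n+\lambda-1))$ are correct, so both routes give $\mathcal{X}(\delta)=-k$ and the same root variables. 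What each buys: the paper's curve must pass through the entire cascade including $q_8$, but the resulting integral uses the simplest residue $-k\,\mathrm{d}y$ along $d_1$; your curve is lower degree and only needs the cascade up to $q_7$, at the cost of working in the deepest blowup chart with the subtler $d_0$-residue. One inaccuracy in your motivation, which does not affect validity: the direct decomposition of the final $\alpha_0$ is \emph{not} obstructed by reducibility or non-transversality --- as the paper's proof shows, it is mediated by $d_1$ (not $d_0$) and the generic member of the $(2,2)$-pencil is irreducible and meets $d_1$ transversally, so the "naive" route works cleanly.
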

\begin{proof}
For comprehensive examples of such calculations, refer to \cite{DzhTak:2018:OSAOSGTODPE,DzhFilSto:2020:RCDOPHWDPE}, here, we only explain one example. Consider the root $\alpha_0$ and represent it as a difference of two effective classes,
\begin{align*}
\alpha_0&=2\mathcal{H}_x+2\mathcal{H}_y-\mathcal{F}_1-2\mathcal{F}_3-\mathcal{F}_4-\mathcal{F}_5-\mathcal{F}_6-\mathcal{F}_7-\mathcal{F}_8\\
&=[2H_{x}+2H_{y}-F_{1}-F_{3}-F_{4}-F_{5}-F_{6}-F_{7}-F_{8}]-[F_{3}].
\end{align*}
The first class is a class of a proper transform of a $(2,2)$-curve passing through points $q_{1},q_{3},\ldots,q_{8}$. A direct computation reveals that its equation in the affine $(x,y)$ chart is $c_{1}(y-n)+x(c_{2}+c_{1}y(t-2x(y+\lambda-1)))=0$, where $c_1$ and $c_2$ are parameters. The second class corresponds to the exceptional divisor $F_3$. Subsequently, we utilize the irreducible component $d_1$, leading to
\begin{equation*}
(H_{x}-F_{2}-F_{3})\bullet(2H_{x}+2H_{y}-F_{1}-F_{3}-F_{4}-F_{5}-F_{6}-F_{7}-F_{8})=(H_{x}-F_{2}-F_{3})\bullet F_{3}=1,
\end{equation*}
so we need to consider these curves in the $(X,y)$-chart. In this chart, the proper transform $2H_{x}+2H_{y}-F_{1345678}$ intersects $d_1$ at $(X=0,1-\lambda)$, while the exceptional divisor $F_3$ intersects $d_1$ at the point $q_3(X=0,0)$. Computing the symplectic form $\omega$ in the $(X,y)$-chart,
\begin{equation*}
\omega = k \frac{\mathrm{d}x\wedge \mathrm{d}y}{x} = -k \frac{\mathrm{d}X\wedge \mathrm{d}y}{X},
\end{equation*}
we see that
\begin{equation*}
\operatorname{res}_{d_{1}}\omega=\operatorname{res}_{X=0}-k \frac{\mathrm{d}X\wedge \mathrm{d}y}{X}=-k\mathrm{d}y, \qquad a_{0}=\mathcal{X}(\alpha_0)=\int_{0}^{1-\lambda}-kdy=k(\lambda-1).
\end{equation*}
Similarly, we obtain
\begin{equation*}
a_{0}=k(\lambda-1), \qquad a_{1}=kn, \qquad a_{2}=-k(n+\lambda).
\end{equation*}
Imposing the normalization condition $\mathcal{X}(\delta)=a_{0}+a_{1}+a_{2}=-k=1$, we find that $k=-1$.
\begin{remark}
Note that the root variable evolution under the discrete step $n\rightarrow n+1$ is given
by
\begin{equation*}
\overline{a}_{0} = a_{0}, \qquad \overline{a}_{1} = a_{1}-1, \qquad \overline{a}_{2} = a_{2} + 1,
\end{equation*}
which corresponds to the standard translation on the root basis given by \eqref{eq:dPE6-rv-evol}.
\end{remark}
\end{proof}


\subsection{The Change of Coordinates} 
\label{sub:the_change_of_coordinates}
We are now ready to prove Theorem~\ref{thm:coordinate-change}, which is the main result of the paper.
\begin{proof}
\textbf{(Theorem~\ref{thm:coordinate-change})} This computation is standard, with detailed examples available in \cite{DzhTak:2018:OSAOSGTODPE,DzhFilSto:2020:RCDOPHWDPE}, so we only provide a brief outline here. From the change of basis in Lemma~\ref{lem:change-basis-fin} on the Picard lattice for the coordinate classes,
\begin{equation*}
\mathcal{H}_x=\mathcal{H}_q+\mathcal{H}_p-\mathcal{E}_3-\mathcal{E}_4, \quad
\mathcal{H}_y=\mathcal{H}_q+\mathcal{H}_p-\mathcal{E}_1-\mathcal{E}_3,
\end{equation*}
we see that $x$ and $y$ are projective coordinates on pencils of $(1,1)$-curves in the $(q,p)$-coordinates, and $x$ corresponds to the pencil passing through $p_{3}$ and $p_{4}$, while $y$ corresponds to the pencil passing through $p_{1}$, $p_{3}$. Thus, we take the change of coordinates to be
\begin{equation*}
x(q,p)=\frac{A(qp-a_1)+Bq}{C(qp-a_1)+Dq}, \quad
y(q,p)=\frac{Kqp+L}{Mqp+N},
\end{equation*}
where the coefficients $A,\ldots,N$ are still to be determined. For example, the correspondence $H_{p} - E_{3} - E_{5} = F_{4} - F_{5}$ means that
\begin{equation*}
(x,Y)(q,P=0)=(\frac{Aq+Bq\cdot0-Aa_{1}\cdot0}{Cq+Dq\cdot0-Ca_{1}\cdot0},\frac{N\cdot0+Mq}{L\cdot0+Kq})=(\frac{A}{C},\frac{M}{K})=(0,0),\quad \text{and so}\quad A=M=0,
\end{equation*}
then we can take $B=N=1$ to get
\begin{equation*}
x(q,p)=\frac{q}{C(qp-a_1)+Dq}, \quad
y(q,p)=Kqp+L.
\end{equation*}
The correspondence $E_{1} - E_{2} = H_{x} -F_{2} - F_{3}$ means that $X(Q=0,0)=D=0$, thus,
\begin{equation*}
x(q,p)=\frac{q}{C(qp-a_1)}.
\end{equation*}
Proceeding in the same way, from the correspondence $E_{6} - E_{7} = F_{6} - F_{7}$ we deduce that $C^2=2K$, from the correspondence $E_{4} = H_{y} - F_{3}$ we get $L=-\frac{a_1C^2}{2}$. Thus, we get
\begin{equation*}
y(q,p)=\frac{C^2}{2}(qp-a_1).
\end{equation*}
Finally, from the correspondence $E_{7} - E_{8} = F_{7} - F_{8}$ we get $Ct=-s$, from the correspondence $E_{8} = F_{8}$ and \eqref{eq:xyn-evol}, we obtain that $C=-\sqrt{2}$.  The
inverse change of variables can now be either obtained directly, or computed in the similar
way. This concludes the proof of Theorem~\ref{thm:coordinate-change}.
\end{proof}


\section*{Acknowledgements} 
\label{sec:acknowledgements}
We express our sincere thanks to Prof. A. Dzhamay for his enthusiastic help and valuable discussions.

M. Zhu acknowledges the support of the National Natural Science Foundation of China under Grant No. 12201333, the Natural Science Foundation of Shandong Province (Grant No. ZR2021QA034), and the Breeding Plan of Shandong Provincial Qingchuang Research Team (Grant No. 2023KJ135).


\bibliographystyle{amsalpha}

\providecommand{\bysame}{\leavevmode\hbox to3em{\hrulefill}\thinspace}
\providecommand{\MR}{\relax\ifhmode\unskip\space\fi MR }
\newcommand{\etalchar}[1]{$^{#1}$}
\providecommand{\href}[2]{#2}

\end{document}